\numberwithin{paragraph}{section}
\numberwithin{equation}{section}
\newtheorem{satz}{Theorem}[section]
\newtheorem{lem}[satz]{Lemma}
\newtheorem{prop}[satz]{Proposition}
\newtheorem{kor}[satz]{Corollary}
\theoremstyle{definition}
\newtheorem{defn}[satz]{Definition}
\newtheorem{bem}[satz]{Remark}
\newtheorem{theointro}{Theorem}
\newtheorem*{ack}{Acknowledgements}
\newcommand{\Z}{\mathbb{Z}}
\newcommand{\Q}{\mathbb{Q}}
\newcommand{\R}{\mathbb{R}}
\newcommand{\C}{\mathbb{C}}
\newcommand{\vedge}{\land}
\newcommand{\del}{\partial}
\newcommand{\Xan}{X^{\an}}
\newcommand{\Ktilde}{\widetilde{K}}
	\DeclareMathOperator{\an}{an}
	\DeclareMathOperator{\PD}{PD}
	\DeclareMathOperator{\Hom}{Hom}
	\DeclareMathOperator{\im}{im}
	\DeclareMathOperator{\sing}{sing}
	\DeclareMathOperator{\supp}{supp}
	\DeclareMathOperator{\divisor}{div}
	\DeclareMathOperator{\Pic}{Pic}
	\DeclareMathOperator{\red}{red}
	\DeclareMathOperator{\CH}{CH}
	\DeclareMathOperator{\Aff}{\mathcal{K}}
	\DeclareMathOperator{\AT}{L}
	\DeclareMathOperator{\ddc}{dd^c}
	\DeclareMathOperator{\HH}{H}
	\DeclareMathOperator{\cl}{cl}
	\DeclareMathOperator{\Div}{Div}
\newcommand{\HHc}{\HH_c}
\DeclareMathOperator{\AS}{\mathcal{A}}
\DeclareMathOperator{\FS}{\mathcal{F}}
\DeclareMathOperator{\HS}{\mathcal{H}}
\DeclareMathOperator{\KS}{\mathcal{K}}
\DeclareMathOperator{\OS}{\mathcal{O}}
\DeclareMathOperator{\Scal}{\mathcal{S}} 
\DeclareMathOperator{\US}{\mathcal{U}}
\DeclareMathOperator{\VS}{\mathcal{V}}
\DeclareMathOperator{\XS}{\mathcal{X}}
\def\quotient#1#2{\raise0.75ex\hbox{$\,#1$}\big/\lower0.75ex\hbox{$#2\,$}}
\title{Tropical Hodge numbers of non-archimedean curves}
 \author[P.~Jell]{Philipp Jell}
 \address{P. Jell,  
Georgia Institute of Technology,
686 Cherry Street,
Atlanta, GA 30332-0160}
\email{philipp.jell@math.gatech.edu}
\thanks{The author was supported by the collaborative research center CRC 1085 "Higher Invariants" funded by the Deutsche Forschungsgemeinschaft.
 }
\begin{document}
\begin{abstract}
We study the tropical Dolbeault cohomology of non-archimedean curves
as defined by Chambert--Loir and Ducros. 
We give a precise condition for when this cohomology satisfies Poincar\'e duality.
The condition is always satisfied when the residue field of the non-archimedean base field is the algebraic 
closure of a finite field.
We also show that for curves over fields with residue field $\C$, the tropical $(1,1)$-Dolbeault cohomology can
be infinite dimensional.  
Our main new ingredient is an exponential type sequence that relates tropical Dolbeault 
cohomology to the cohomology of the sheaf of harmonic functions.
As an application of our Poincar\'e duality result, 
we calculate the dimensions of the tropical Dolbeault cohomology, called tropical Hodge numbers, for (open subsets 
of) curves.

\bigskip

\noindent
MSC: Primary 14G22; Secondary  14G40, 32P05
\end{abstract}

\maketitle

%\tableofcontents

\section{Introduction}

Let $K$ be a field that is complete with respect to a non-archimedean absolute value. 
For a $K$-analytic space $X$ in the sense of V. Berkovich \cite{BerkovichSpectral}
A. Chambert-Loir and A. Ducros introduced a double complex 
$(\AS^{\bullet, \bullet}, d', d'')$ 
of sheaves of smooth real-valued differential forms on $X$ \cite{CLD}. 
These bigraded differential forms are analogues of smooth differential forms on complex manifolds. 
We consider these forms when the analytic space $\Xan$ is the associated analytic space of a variety $X$ over $K$.
We always assume that $K$ is algebraically closed and that its absolute value is non-trivial. 

A natural question that arose in this context concerns the behavior of the associated Dolbeault cohomology 
\begin{align*}
\HH^{p,q}(\Xan) := \frac {\ker (d'' \colon \AS^{p,q}(\Xan) \to \AS^{p,q+1}(\Xan))}{\im (d'' \colon \AS^{p,q-1}(\Xan) \to \AS^{p,q}(\Xan))}.
\end{align*} 
and the associated tropical Hodge numbers $h^{p,q}(\Xan) := \dim_\R \HH^{p,q}(\Xan)$. 
Concretely, one may ask whether, for smooth proper varieties $X$ of dimension $n$, 
this cohomology satisfies a version of Poincar\'e duality, 
i.e.~if the canonical pairing $\HH^{p,q}(\Xan) \times \HH^{n-p,n-q}(\Xan) \to \R$ is a perfect pairing for all $p$ and $q$. 
This implies the symmetry $h^{p,q}(\Xan) = h^{n-p,n-q}(\Xan)$ for the tropical Hodge numbers. 
We give a precise condition when Poincar\'e duality holds for curves. 

\begin{theointro} [Theorem \ref{PD}, Remark \ref{remark semistable}] \label{ThmA}
Let $X$ be a smooth projective curve over $K$.
Then $X$ satisfies Poincar\'e duality
if and only if 
there exists a strictly semistable model of $X$ over the valuation ring of $K$ such
that for every irreducible component $C$ of the special fiber the abelian group $\Pic^0(C)$ is torsion. 
\end{theointro}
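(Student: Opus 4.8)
The plan is to pin the failure of Poincar\'e duality to a single tropical Hodge number, to compute that number by means of the exponential-type sequence, and then to rephrase the outcome in terms of semistable models.

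\emph{Step 1: reduction to the $(1,1)$-part.} Since $\dim X=1$, Poincar\'e duality for $X$ is the conjunction of the perfectness of the pairings $\HH^{0,0}(\Xan)\times\HH^{1,1}(\Xan)\to\R$ and $\HH^{1,0}(\Xan)\times\HH^{0,1}(\Xan)\to\R$. As $X$ is connected, $\HH^{0,0}(\Xan)=\R$, and $\int_{\Xan}\colon\HH^{1,1}(\Xan)\to\R$ is always surjective. I would first check that the second pairing is always perfect: by the $d'$- and $d''$-Poincar\'e lemmas, $\HH^{1,0}(\Xan)$ and $\HH^{0,1}(\Xan)$ are identified with the first real cohomology of the minimal skeleton $\Gamma$, of dimension the first Betti number $b_1(\Gamma)$, and the induced pairing is the nondegenerate one obtained by wedging a harmonic $(1,0)$-form with a harmonic $(0,1)$-form and integrating over $\Xan$. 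Hence $X$ satisfies Poincar\'e duality if and only if $\int_{\Xan}$ is injective, i.e.\ $h^{1,1}(\Xan)=1$; since the computation below shows $h^{1,1}(\Xan)\in\{1,\infty\}$, this is the same as $h^{1,1}(\Xan)<\infty$.

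\emph{Step 2: passage to the sheaf of harmonic functions.} By the $d''$-Poincar\'e lemma on curves, $\AS^{1,\bullet}$ is a fine resolution of $\mathcal{F}:=\ker(d''\colon\AS^{1,0}\to\AS^{1,1})$, so $\HH^{1,1}(\Xan)\cong\HH^{1}(\Xan,\mathcal{F})$. The exponential-type sequence, in the form relevant here, is the short exact sequence of sheaves $0\to\underline{\R}\to\mathcal{H}\xrightarrow{d'}\mathcal{F}\to 0$, where $\mathcal{H}=\ker(d'd''\colon\AS^{0,0}\to\AS^{1,1})$ is the sheaf of harmonic functions; surjectivity of $d'$ follows from the $d'$-Poincar\'e lemma together with $d'd''=-d''d'$, and its kernel is the sheaf of locally constant functions. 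Passing to cohomology, and using that $\HH^{0}(\Xan,\mathcal{H})=\R=\HH^{0}(\Xan,\underline{\R})$ (maximum principle on the proper curve $X$), that $\HH^{2}(\Xan,\underline{\R})=0$ (as $\Xan$ is homotopy equivalent to $\Gamma$), and that $h^{1,0}(\Xan)=b_1(\Gamma)=\dim\HH^{1}(\Xan,\underline{\R})$ from Step 1, the long exact sequence collapses to a natural isomorphism $\HH^{1,1}(\Xan)\cong\HH^{1}(\Xan,\mathcal{H})$.

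\emph{Step 3: the role of $\Pic^0$.} Fix a strictly semistable model $\mathfrak{X}$ with special fiber $\mathfrak{X}_s$ and skeleton $\Gamma\subseteq\Xan$, so that the vertices of $\Gamma$ are the type-$2$ points $x_C$ attached to the irreducible components $C$ of $\mathfrak{X}_s$ (each a smooth projective curve over $\Ktilde$). I would compute $\HH^{1}(\Xan,\mathcal{H})$ through the \v{C}ech complex of the cover of $\Xan$ by retraction-preimages of the open stars of $\Gamma$, using the description of smooth harmonic functions on these elementary tubes furnished by non-archimedean potential theory. On the tubes around edge points and around vertices with $C\cong\mathbb{P}^{1}_{\Ktilde}$ the sheaf $\mathcal{H}$ is tame --- locally generated over $\underline{\R}$ by pull-backs and by functions $\log|f|$ --- and contributes only the topology of $\Gamma$, already accounted for. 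Around a vertex $x_C$ with $g(C)>0$, a smooth harmonic function carries, beyond its boundary slopes along the edges at $x_C$, a class in $\Pic^0(C)\otimes_{\Z}\R$, and the tame functions are exactly those with vanishing class (the Poincar\'e--Lelong formula on Berkovich curves: a degree-zero divisor on $C$ equals $\divisor(f)$, equivalently is realized by $\log|f|$, precisely when its class in $\Pic^0(C)$ is zero). Collating this in the \v{C}ech complex then produces a short exact sequence
\[
0\longrightarrow\bigoplus_{C\subseteq\mathfrak{X}_s}\Pic^0(C)\otimes_{\Z}\R\longrightarrow\HH^{1,1}(\Xan)\stackrel{\int_{\Xan}}{\longrightarrow}\R\longrightarrow 0 .
\]
Since $\Pic^0(C)$ is divisible with torsion subgroup $\cong(\Q/\Z)^{2g(C)}$, the group $\Pic^0(C)\otimes_{\Z}\R$ vanishes when $\Pic^0(C)$ is torsion and is infinite-dimensional over $\R$ otherwise; hence $h^{1,1}(\Xan)=1$ if $\Pic^0(C)$ is torsion for every component $C$ of $\mathfrak{X}_s$, and $h^{1,1}(\Xan)=\infty$ otherwise.

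\emph{Step 4: independence of the model and conclusion.} It remains to observe that whether ``$\Pic^0(C)$ is torsion for every component $C$ of the special fiber'' holds is independent of the chosen strictly semistable model: every strictly semistable model dominates the stable (resp.\ minimal semistable) model, the positive-genus components persist with their $\Pic^0$ unchanged under the intervening admissible blow-ups, and the exceptional components introduced are rational with $\Pic^0=0$. Combining with Steps 1--3 gives: $X$ satisfies Poincar\'e duality $\iff h^{1,1}(\Xan)=1\iff\Pic^0(C)\otimes_{\Z}\R=0$ for every component $C$ of one (equivalently, any) strictly semistable model $\iff$ there exists a strictly semistable model of $X$ whose special-fiber components all have torsion $\Pic^0$. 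I expect the main obstacle to be Step 3 --- proving exactness of the exponential-type sequence and carrying out the local analysis of smooth harmonic functions on the positive-genus elementary tubes, in particular identifying the obstruction to tameness with a class in $\Pic^0(C)\otimes_{\Z}\R$ and organizing the \v{C}ech bookkeeping so as to split this off cleanly from the one-dimensional topological piece of $\HH^{1}(\Xan,\mathcal{H})$.
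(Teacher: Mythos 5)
The architecture you propose (exponential-type sequence, a $\Pic^0\otimes_\Z\R$ obstruction concentrated at the positive-genus type II points, model-independence at the end) is the paper's, but Step 1 contains a genuine gap that also infects Step 2. You assert that the pairing $\HH^{1,0}(\Xan)\times\HH^{0,1}(\Xan)\to\R$ is \emph{always} perfect because both sides are ``identified with $H^1(\Gamma,\R)$ by the $d'$- and $d''$-Poincar\'e lemmas''. For $\HH^{0,1}$ this is correct, but $\HH^{1,0}(\Xan)=\AS^{1,0,\cl}(\Xan)$ is the space of \emph{global} $d''$-closed $(1,0)$-forms, not the quotient $\AS^{1,0}(\Xan)/d'C^\infty(\Xan)$ that the $d'$-Poincar\'e lemma computes; the comparison map from the former to the latter is injective (since $\Aff(\Xan)=\R$), but its surjectivity is exactly the open question. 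Unconditionally one only knows left-nondegeneracy, via the operator $J$ with $\int\alpha\wedge J\alpha\neq0$. Indeed, in the paper the equality $h^{1,0}(\Xan)=h^{0,1}(\Xan)$ is a \emph{consequence} of the theorem (it is how Liu's conjecture for curves is deduced), so taking it as an input is circular. Without it, the long exact sequence of $0\to\underline{\R}\to\Aff\to\AS^{1,0,\cl}\to0$ only exhibits $\HH^{1,1}(\Xan)$ as the quotient of $\HH^1(\Xan,\Aff)$ by the finite-dimensional space $\HH^{0,1}(\Xan)/\HH^{1,0}(\Xan)$, not as isomorphic to it, and your reduction ``PD $\iff h^{1,1}=1$'' loses one direction. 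The repair is to run the sequence both ways, as the paper does: if $S_X=0$ then $\HH^1(\Xan,\Aff)\cong\R$ forces the surjection onto $\HH^{1,1}(\Xan)$ to be an isomorphism, hence $\HH^{1,0}(\Xan)\to\HH^{0,1}(\Xan)$ is an isomorphism and perfectness follows from left-nondegeneracy; conversely, perfectness of both pairings forces $\dim\HH^1(\Xan,\Aff)=1$.

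A second, smaller point concerns Step 3 and terminology. Your $\mathcal{H}=\ker(d'd''\colon\AS^{0,0}\to\AS^{1,1})$ is what the paper calls the sheaf of \emph{affine} functions $\Aff$; Thuillier's harmonic functions form a strictly larger sheaf $\HS_X$, and the discrepancy is precisely the skyscraper sheaf $\bigoplus_{x\in G(X)}\Pic^0(C_x)\otimes_\Z\R$ -- which is the whole point. Your local statement (``tame'' $=$ sums of $\log|f|$ $=$ vanishing class in $\Pic^0(C_x)\otimes\R$) is exactly Thuillier's Lemme 2.3.22 combined with Wanner's theorem identifying $\ker(d'd'')$ with $\R\log|\OS_X^\times|$, and the quotient $\R$ in your short exact sequence requires the separate global computation $\HH^1(\Xan,\HS_X)\cong\R$, which the paper obtains from the soft resolution $0\to\HS_X\to\AT^0\to\AT^1\to0$ by lisse functions and measures rather than from a \v{C}ech cover. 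So the sketch points at the right inputs, but the technical heart (which you rightly flag as the main obstacle) is deferred to precisely the results the paper assembles. Step 4 is fine, though defining $S_X$ intrinsically as a sum over type II points, as the paper does, makes model-independence immediate.
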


Let us state some consequences and remarks.
\begin{itemize}
\item
Theorem \ref{ThmA} implies that all smooth projective curves over $K$ 
satisfy Poincar\'e duality if the residue field of $K$ is the algebraic closure of a finite field (see Corollary \ref{corollary finite field}). 
\item
Theorem \ref{ThmA} enables us to calculate the tropical Hodge numbers $h^{p,q}(\Xan)$ 
(see Theorem \ref{thm dimensions global}). 
\item 
Using Theorem \ref{ThmA} we are able to establish 
the case of curves for a conjecture by Y. Liu \cite[Conjecture 5.2]{Liu2} (see Theorem \ref{proof Liu conjecture}). 
\item
The property that $\Pic^0(C)$ is torsion for all irreducible components of the special fiber holds 
for all strictly semistable models of $X$ if it holds for one strictly semistable model of $X$  (see Remark \ref{remark semistable}).
\end{itemize}

We also show that in general $\HH^{1,1}(\Xan)$ is not finite dimensional. 

\begin{theointro} [Theorem \ref{non finite dimensional}, Remark \ref{remark semistable}] \label{ThmB}
Let $X$ be a smooth projective curve over $K$ and let the residue field of $K$ be $\C$.  
Then $\HH^{1,1}(X)$ is finite dimensional if and only if 
there exists a strictly semistable model of $X$
such that the special fiber has only rational irreducible components.  
\end{theointro}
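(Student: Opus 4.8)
The plan is to deduce the ``if'' direction from Theorem~\ref{PD}, and to establish the ``only if'' direction by computing the cohomology of the sheaf $\mathcal{H}$ of harmonic functions on $\Xan$. For the ``if'' direction, suppose that some strictly semistable model of $X$ has special fiber with only rational irreducible components. Then $\Pic^0(C) = 0$, and in particular is torsion, for every irreducible component $C$, so by Theorem~\ref{PD} the curve $X$ satisfies Poincar\'e duality. Hence $\HH^{1,1}(\Xan)$ is in perfect duality with $\HH^{0,0}(\Xan) = \R$, and is therefore finite dimensional.

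For the ``only if'' direction we argue by contraposition: assume that no strictly semistable model of $X$ has only rational components, and fix a strictly semistable model $\mathfrak{X}$ of $X$. Then some irreducible component $C$ of its special fiber has positive genus; since the residue field of $K$ is $\C$, the abelian group $\Pic^0(C)$ is a complex torus of positive dimension, hence divisible, not torsion, and of uncountable $\Q$-rank, so that $\Pic^0(C)\otimes_{\Z}\R$ is an infinite dimensional real vector space. Let $\Gamma\subseteq\Xan$ be the skeleton of $\mathfrak{X}$ (a finite metric graph), with retraction $\tau\colon\Xan\to\Gamma$, let $v_1,\dots,v_m$ be the vertices of $\Gamma$, corresponding to the irreducible components $C_1,\dots,C_m$ of the special fiber, and let $\xi_i$ denote the type-$2$ point above $v_i$, whose residue curve is $C_i$. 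Recall that $\Xan\setminus\Gamma$ is a disjoint union of open discs, that the tangent directions at $\xi_i$ pointing off $\Gamma$ are in natural bijection with the closed points of $C_i$ (away from the finitely many nodes), and that over open discs and over tubes of open segments of $\Gamma$ the restriction of $\mathcal{H}$ is the pullback of the locally constant rank-$2$ sheaf of affine functions on a contractible interval. Next I invoke the exponential type sequence: it shows that $\HH^{1,1}(\Xan)$ and $\HH^1(\Xan,\mathcal{H})$ differ by finite dimensional spaces. Concretely, for curves this rests on the fact that $0\to\mathcal{H}\to\AS^{0,0}\xrightarrow{\ddc}\AS^{1,1}\to0$ is a soft resolution of $\mathcal{H}$ (softness of the sheaves $\AS^{\bullet,\bullet}$ together with the local $\ddc$-lemma and the local $d'$-Poincar\'e lemma for curves), whence $\HH^1(\Xan,\mathcal{H})\cong\AS^{1,1}(\Xan)/\ddc\,\AS^{0,0}(\Xan)$ surjects onto $\HH^{1,1}(\Xan)=\AS^{1,1}(\Xan)/d''\AS^{1,0}(\Xan)$ with kernel a quotient of the finite dimensional space $\HH^{1,0}(\Xan)$. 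It therefore suffices to prove that $\HH^1(\Xan,\mathcal{H})$ is infinite dimensional.

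To see this, apply the Leray spectral sequence to $\tau\colon\Xan\to\Gamma$. As $\Gamma$ is a finite graph, $\HH^p(\Gamma,-)=0$ for $p\geq2$, so there is a surjection $\HH^1(\Xan,\mathcal{H})\twoheadrightarrow\HH^0(\Gamma,R^1\tau_*\mathcal{H})$. Using the description of $\mathcal{H}$ over tubes of open segments recalled above (and the contractibility of the fibers of $\tau$ there), $R^1\tau_*\mathcal{H}$ is supported on the vertices of $\Gamma$, with stalk at $v_i$ equal to $\varinjlim\HH^1(U,\mathcal{H})$ over star neighbourhoods $U$ of $\xi_i$. The key step is to identify this stalk in terms of $C_i$: sending a harmonic function to the divisor on $C_i$ recording its slopes along the tangent directions at $\xi_i$, and factoring out the divisors arising from rational functions on $C_i$, one identifies the stalk, up to a finite dimensional correction coming from the edges of $\Gamma$ at $v_i$, with $\Pic^0(C_i)\otimes_{\Z}\R$. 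For $C_i$ rational this vanishes, but for the component $C$ of positive genus the corresponding summand $\Pic^0(C)\otimes_{\Z}\R$ is infinite dimensional. Since $\HH^0(\Gamma,R^1\tau_*\mathcal{H})$ is the (finite) direct sum of these stalks, it is infinite dimensional, hence so are $\HH^1(\Xan,\mathcal{H})$ and $\HH^{1,1}(\Xan)$, as required.

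The main obstacle is the local computation in the last paragraph, namely the identification of the stalk of $R^1\tau_*\mathcal{H}$ at $v_i$ with (up to finite dimensional spaces) $\Pic^0(C_i)\otimes_{\Z}\R$. This requires a careful analysis of the sheaf of harmonic functions near a type-$2$ point and of the cohomology of a star neighbourhood, and some care is needed because the natural open cover of such a neighbourhood by residue discs is not locally finite, so that a naive \v{C}ech computation overcounts; the correct contribution must be extracted from the cohomology of the sheaf of affine functions on the tree associated with $\xi_i$ together with the slope map along the (integrally structured) tangent directions at $\xi_i$, whose image and cokernel are described by divisors on $C_i$ modulo rational equivalence. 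Granting Theorem~\ref{PD}, the exponential type sequence, and the standard structure theory of skeleta and of Berkovich curves, the remaining steps are routine.
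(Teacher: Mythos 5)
Your ``if'' direction is fine and matches the paper (rational components give $\Pic^0(C)=0$, hence $S_X=0$, hence Poincar\'e duality and $\HH^{1,1}(\Xan)\cong\R$), and your divisibility argument showing $\Pic^0(C)\otimes_\Z\R$ is infinite dimensional over residue field $\C$ is exactly right. The ``only if'' direction, however, has a genuine gap at its foundation. You resolve ``the sheaf $\mathcal{H}$ of harmonic functions'' by $0\to\mathcal{H}\to\AS^{0,0}\xrightarrow{\ddc}\AS^{1,1}\to 0$ with $\AS^{0,0}$ the smooth functions. But the kernel of $d'd''$ on \emph{smooth} functions is the sheaf $\Aff$ of affine functions, and the entire content of the theorem is that $\Aff$ is strictly smaller than Thuillier's sheaf $\HS_X$ of harmonic (lisse) functions: the quotient $\HS_X/\Aff$ is precisely the skyscraper sheaf $\Scal_X$ with stalk $\Pic^0(C_x)\otimes_\Z\R$ at each positive-genus type II point (Proposition \ref{thm harmonic sequence}, resting on Thuillier's Lemme 2.3.22 and Wanner's identification $\Aff\cong\R\log|\OS_X^\times|$). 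If $\mathcal{H}$ denotes the genuine harmonic sheaf, your sequence is not exact at $\AS^{0,0}$ (harmonic functions near positive-genus points are not smooth), and worse, your target $\HH^1(\Xan,\HS_X)$ is \emph{one}-dimensional (Proposition \ref{prop harmonic cohomology}, proved via the soft resolution by lisse functions $0\to\HS_X\to\AT^0\to\AT^1\to 0$), so the strategy of showing it infinite dimensional cannot succeed. If instead $\mathcal{H}=\Aff$, the reduction to $\HH^1(\Xan,\Aff)$ is valid, but then the Leray stalk computation you describe is exactly the hard content and you have not supplied it: identifying $\varinjlim\HH^1(\tau^{-1}(W),\Aff)$ with $\Pic^0(C_i)\otimes\R$ modulo something finite dimensional requires (i) that a harmonic function on $\tau^{-1}(W)$ is constant on every residue disc (a maximum-principle statement, following from $\int 1\,\ddc h=0$ on closed subdiscs, which you neither prove nor cite --- and which contradicts your assertion that $\mathcal{H}$ restricted to open discs is a pulled-back rank-$2$ sheaf), so that its divisor at $\xi_i$ is supported on the finitely many node directions; and (ii) that $\HH^1(\tau^{-1}(W),\HS_X)$ vanishes. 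You yourself flag this local analysis as ``the main obstacle,'' and without it the proof does not close.

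For comparison, the paper avoids the Leray spectral sequence and all local stalk analysis by staying global: from $0\to\Aff\to\HS_X\to\Scal_X\to 0$ and the fact that global harmonic functions on the compact $\Xan$ are constant, one gets the exact sequence $0\to S_X\to\HH^1(\Xan,\Aff)\to\HH^1(\Xan,\HS_X)\to 0$ with $\HH^1(\Xan,\HS_X)\cong\R$, hence $\dim\HH^1(\Xan,\Aff)=\dim S_X+1$; the exponential sequence (Corollary \ref{kor exponential sequence}) then transfers the infinite-dimensionality of $S_X$ to $\HH^{1,1}(\Xan)$, using finite-dimensionality of $\HH^{0,1}(\Xan)$. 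I would replace your localization argument by this global exact sequence: it is shorter, and every step is actually provable from the cited inputs.
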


Again, if the property that all irreducible components of the special fiber are rational holds  
for one strictly semistable model, it holds for all strictly semistable models (see Remark \ref{remark semistable}). 
In particular for $K$ as in Theorem \ref{ThmB}, the real vector space $\HH^{1,1}(\Xan)$ is infinite dimensional 
for all smooth projective curves $X$ of positive genus and good reduction.

\subsection{Relation with previous results}

Now let $K$ again be any algebraically closed complete non-archimedean field 
and let $X$ be a variety over $K$. 
The author showed in \cite[Corollary 4.6]{Jell} that for all $q$ the cohomology $\HH^{0,q}(\Xan)$ agrees with $\HH^q_{\sing}(\Xan, \R)$ and 
is therefore finite dimensional. 
In his thesis he further showed that for proper $X$ the vector spaces $\HH^{1,0}(\Xan)$ and 
$\HH^{\dim(X),0}(\Xan)$ are finite dimensional. 
He did this by constructing injective maps
$\HH^{p,0}(\Xan) \to \HH^{0,p}(\Xan)$ for $p = 1$ and $p = \dim(X)$ \cite[Proposition 3.4.11]{JellThesis}. 
This was shown for all $p$ by Y. Liu when $K$ is a closed subfield of the completed algebraic closure
of a field $K_0$, where $K_0$ is either a field of characteristic $0$ or $K_0 = k((t))$ for a finite field $k$ \cite[Theorem 1.1]{Liu2}

Liu further showed that there are cycle class maps $\CH^q(X) \to \HH^{q,q}(\Xan)$ \cite[Theorem 1.1]{Liu}. 
He additionally showed that for any smooth proper variety $X$ over $\C_p$ the space 
$\HH^{1,1}(\Xan)$ is finite dimensional \cite[Theorem 1.9]{Liu}  
and one dimensional when $X$ is a curve \cite[Theorem 7.3 (2)]{Liu}.
His techniques include proving a weight filtration on the cohomology of the analytic de Rham complex. 

Liu further constructed so called monodromy maps $N \colon \HH^{p,q}(\Xan) \to \HH^{q,p}(\Xan)$ for $p \geq q$ and conjectures 
that these are isomorphisms 
when $X$ is a smooth proper variety over an algebraically closed, non-archimedean field whose residue field 
is the algebraic closure of a finite field \cite[Conjecture 5.2]{Liu2}. 
He showed that the map $N \colon \HH^{1,0}(\Xan) \to \HH^{0,1}(\Xan)$ is an isomorphism 
under the following assumptions: 
Firstly, he assumed that $X$ is the base change of a smooth proper variety $X_0$ over a field $K_0$
that is a finite extension of $\Q_p$ or 
$k (( t))$ for a finite field $k$. 
Further he supposes that $X_0$ admits a proper strictly semistable model over $K_0^\circ$ \cite[Theorem 1.1]{Liu2}.
His techniques use arithmetic geometry, in particular the weight spectral sequence. 
Theorem \ref{ThmA} implies Liu's conjecture for curves in full generality (see Theorem \ref{proof Liu conjecture}).

For Mumford curves V. Wanner and the author showed that Poincar\'e duality holds 
over any complete non-archimedean algebraically closed field $K$ \cite[Theorem 1]{JellWanner}.
The techniques in their work are tropical.
They apply previous work by K. Shaw, J. Smacka and the author on Poincar\'e duality for smooth tropical varieties \cite{JSS}. 
Wanner and the author also obtained a Poincar\'e duality result for certain open subsets of $\Xan$ 
for arbitrary smooth projective curves $X$ \cite[Theorem 2]{JellWanner}.
In Section \ref{sect. global to local} we generalize this result 
to open subsets which form a basis of the topology of $\Xan$ for any smooth curve $X$ 
that satisfies the conditions of Theorem \ref{ThmA}. 
The mentioned basis of the topology consists of so called \emph{strictly simple open subsets} (see Definition \ref{Def. simple}). 
Our proof crucially relies on \cite[Theorem 2]{JellWanner}.

\begin{theointro} [Theorem \ref{Theorem Teilmenge Koho.}] \label{ThmC}
Let $X$ be a smooth projective curve over $K$ that satisfies Poincar\'e duality. 
Let $U$ be a strictly simple open subset of $\Xan$ and denote by $k := \# \del U$ the finite number of boundary points. 
Then we have 
	\begin{align*}
	h^{p,q}(U) = 
	\begin{cases}
	1 &\text{ if } (p,q) = (0,0) \\
	k - 1 &\text{ if } (p,q) = (1,0) \\
	0 &\text{ else}.
	\end{cases}
	\end{align*}
\end{theointro}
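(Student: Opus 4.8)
The plan is to reduce everything to the case of Mumford curves treated in \cite[Theorem 2]{JellWanner} and then to assemble the cohomology of $U$ from a Mayer--Vietoris or exhaustion argument. First I would recall the structure of a strictly simple open subset $U$: by Definition \ref{Def. simple} such a $U$ is built from a skeleton which is a finite tree (or at least a graph) with $k$ open ends, so topologically $U$ deformation retracts onto a contractible one-dimensional set. In particular $\HH^q_{\sing}(U,\R)$ is $\R$ for $q=0$ and $0$ otherwise, so by \cite[Corollary 4.6]{Jell} we immediately get $h^{0,q}(U) = \dim_\R \HH^q_{\sing}(U,\R)$, which gives the $(0,0)$ entry and kills all $(0,q)$ with $q>0$. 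Dually, since $U$ is an open subset of a curve (so of dimension $1$) and the relevant forms vanish in bidegrees $(p,q)$ with $p>1$ or $q>1$, the only remaining cases to compute are $(1,0)$ and $(1,1)$.

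Next I would handle $(1,1)$. Here the key input is that $X$ satisfies Poincar\'e duality, which by Theorem \ref{ThmA} means $X$ has a strictly semistable model whose special fiber has components with torsion $\Pic^0$; this forces $X$ to be (at least locally, on the strictly simple pieces) of Mumford type in the sense relevant to \cite{JellWanner}, so that the local Poincar\'e-type duality of \cite[Theorem 2]{JellWanner} applies to $U$. I would invoke that result to get a pairing between $\HH^{1,1}(U)$ (or rather the compactly supported version) and $\HH^{0,0}$ of a suitable related object; combined with the fact that $U$ is "open at $k$ ends," this should yield $h^{1,1}(U) = 0$. Alternatively, and perhaps more cleanly, I would use the exponential-type sequence advertised in the abstract relating $\AS^{\bullet,\bullet}$ to the sheaf of harmonic functions: on a strictly simple open set the harmonic sheaf cohomology in the relevant degree vanishes, again forcing $h^{1,1}(U)=0$.

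Finally, for $(1,0)$: I would compute $\HH^{1,0}(U)$ as the space of global $d'$-closed $(1,0)$-forms modulo nothing (there are no $(1,-1)$ forms), which on a curve is the space of "global differentials" of the analytic/tropical structure on $U$. On a strictly simple $U$ with skeleton a graph having first Betti number $0$ and $k$ open ends, the space of such harmonic $1$-forms is governed by the combinatorics of the $k$ ends with a single linear relation (the residues/flows must sum to zero, or equivalently the tree has $k-1$ independent "branch directions"), giving $h^{1,0}(U) = k-1$. Concretely I would identify $\HH^{1,0}(U)$ with $\HH^0$ of the sheaf of harmonic functions modulo constants, or with a space of harmonic $1$-forms on the skeleton tree with prescribed slopes at the $k$ leaves, and count. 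The main obstacle I anticipate is the $(1,1)$-vanishing: showing $h^{1,1}(U)=0$ requires genuinely using both the Poincar\'e duality hypothesis on $X$ (to rule out the infinite-dimensional phenomena of Theorem \ref{ThmB}) and the precise local structure of strictly simple opens, and making the passage from the global duality statement to the local one — via \cite[Theorem 2]{JellWanner} and the exponential sequence — watertight is where the real work lies; the $(0,q)$ and $(1,0)$ computations are comparatively formal.
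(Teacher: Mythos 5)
The central step of your argument has a genuine gap. You claim that the Poincar\'e duality hypothesis on $X$ ``forces $X$ to be (at least locally, on the strictly simple pieces) of Mumford type,'' so that \cite[Theorem 2]{JellWanner} applies directly to $U$. This is false: the condition of Theorem \ref{ThmA} is that $\Pic^0(C_x)$ is \emph{torsion} for every type II point $x$ (equivalently $S_X=0$), which is strictly weaker than every $C_x$ being rational (equivalently $G(X)=\emptyset$, the Mumford case). For instance, over a field whose residue field is the algebraic closure of a finite field every smooth projective curve satisfies the hypothesis, yet a positive-genus curve with good reduction has $G(X)\neq\emptyset$; a strictly simple open subset $U$ may then contain positive-genus type II points, and \cite[Theorem 2]{JellWanner} only covers open sets disjoint from $G(X)$. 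The paper bridges exactly this gap with Theorem \ref{PD local}: one first proves \emph{global} Poincar\'e duality for $\Xan$ (Theorem \ref{PD}, via the exponential sequence and the computation of $\HH^1(\Xan,\HS_X)$), and then runs a Mayer--Vietoris ``three out of four'' argument (Lemma \ref{lem 3 out of 4}) with $\Xan$, $U$, $\Xan\setminus G(X)$ and $U\setminus G(X)$ to push duality down to $U$, using \cite[Theorem 2]{JellWanner} only on the pieces that avoid $G(X)$. You mention Mayer--Vietoris in your opening sentence but never deploy it for this purpose, and without it the $(1,1)$ and $(1,0)$ computations are unsupported at the points of $G(X)\cap U$.

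Once $U$ is known to satisfy $\PD$, the rest goes roughly as you say, but the paper's route for $(1,0)$ is cleaner and actually complete: rather than counting ``harmonic $1$-forms on the skeleton with a residue relation at the $k$ leaves'' (which you assert without proof, and which would also require controlling forms away from the skeleton), it uses $\PD_U$ to identify $h^{1,0}(U)$ with $h^{0,1}_c(U)=h^1_{c,\sing}(U)$ and $h^{1,1}(U)$ with $h^{0,0}_c(U)$, then computes these by proper homotopy invariance for the one-point union of $k$ half-open intervals (Remark \ref{rem properties}), yielding $k-1$ and $0$ respectively since $U$ is connected and non-compact. Your alternative route to $h^{1,1}(U)=0$ via the exponential sequence would additionally require showing $\HH^1(U,\HS_X)=0$ for non-compact $U$, which is not established anywhere in the paper (Proposition \ref{prop harmonic cohomology} is only for projective curves), so it cannot be invoked as stated.
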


 \subsection{Notation and notions in other papers}

Note that in \cite{Jell} and \cite{JellThesis} 
the author considers 
cohomology with respect to $d'$ instead of $d''$. 
Therefore, when translating results to our setting,
one has to flip the bigrading.

\subsection{Techniques and outline of the paper}

We study tropical Dolbeault cohomology by relating it to the cohomology 
of the sheaf $\KS := \ker(d'd'' \colon \AS^{0,0} \to \AS^{1,1})$. 
The key new ingredient that establishes this relation is a short exact sequence,
which might be thought of as a non-archimedean analogue of the exponential sequence
in complex geometry. 
A tropical analogue of this short exact sequence was introduced by G. Mikhalkin and I. Zharkov 
to define 1-forms on tropical curves \cite{MikZharII}.   
We prove a non-archimedean version in Proposition \ref{thm exponential sequence}. 
The terms of the associated long exact sequence include $\HH^{1,0}(\Xan)$, $\HH^{0,1}(\Xan)$, $\HH^{1,1}(\Xan)$
and $\HH^{1}(\Xan, \KS)$.  
This sequence implies that Poincar\'e duality holds for $X$ if and only if $\HH^1(\Xan, \KS)$ is one-dimensional 
and that $\HH^{1,1}(\Xan)$ is finite dimensional if and only if $\HH^{1}(\Xan, \KS)$ is.

Our technique to calculate $\HH^1(\Xan, \KS)$ involves Thuillier's sheaf of harmonic functions $\HS_X$, 
which naturally contains $\Aff$ as a subsheaf. 
Using Thuillier's notion of lisse functions and 1-forms we show that $\HH^1(\Xan, \HS_X)$ is one dimensional. 
By \cite[Lemme 2.3.22]{Thuillier} and \cite[Theorem 5.2.4]{Wanner} 
the quotient $\HS_X / \KS$  is a finite sum of skyscraper sheaves and
vanishes if and only if the conditions of Theorem \ref{ThmA} are satisfied 
(see Definition \ref{defn G(X)} and Proposition \ref{seq harmonic}). 

This enables us to prove Theorem \ref{ThmA} using the above mentioned exact sequence and 
our calculation of the cohomology $\HH^1(\Xan, \HS_X)$.  
For the proof of Theorem \ref{ThmB} we need to use that $\HH^0(\Xan, \HS_X / \KS)$ 
may have infinite dimension when the residue field of $K$ is $\C$. 

To prove Theorem \ref{ThmC} we use Theorem \ref{ThmA}, a Mayer-Vietoris argument and 
a result by Wanner and the author \cite[Theorem 2]{JellWanner}
to show that if $X$ satisfies the conditions of Theorem \ref{ThmA}, 
open subsets of $\Xan$ satisfy a local version of Poincar\'e duality (see Theorem \ref{PD local}). 
While Mayer-Vietoris sequences are normally used to pass from local results to global results, 
we use our global result from Theorem \ref{ThmA} and the fact that we already have local results at
all but finitely many points to deduce local results at all points. 
We then use results on the local structure of non-archimedean curves by M. Baker, S. Payne and J. Rabinoff \cite{BPR2} and 
identification of tropical Dolbeault cohomology with singular cohomology to prove Theorem \ref{ThmC}.

\begin{ack}
The author would like to thank Walter Gubler, Klaus K\"unnemann and Veronika Wanner for reading 
previous drafts of this work and providing useful comments. 
Further he would like to thank Yifeng Liu, Johannes Rau and Kristin Shaw for helpful suggestions. 
Finally the author would like to thank the referee for their very precise report and helpful suggestions, 
which helped a lot in clarifying the presentation of the paper. 
\end{ack}

%------------------------------------------------------------------------------------------
\section{Preliminaries}

Throughout this paper, $K$ will be an algebraically closed field that is complete with respect
to a non-archimedean absolute value, which is assumed to be non-trivial. 
We denote its valuation ring by $K^\circ$ and its residue field by $\widetilde{K}$. 

Further $X$ will be a curve over $K$, by which we mean an irreducible, reduced, separated, one-dimensional $K$-scheme 
of finite type.

\subsection{Analytification}

To the curve $X$, one can associate an analytic space $\Xan$ over $K$ in the sense of Berkovich \cite{BerkovichSpectral}. 
We will merely concern ourselves with the underlying topological space of $\Xan$, 
which is a paracompact Hausdorff space. 
This topological space is homotopy equivalent to a finite one-dimensional simplicial complex. 
If $X$ is projective, then $\Xan$ is further compact. 

The points in $\Xan$ can be classified into four types \cite[p.~27]{Thuillier} (extending results by Berkovich \cite[p.~19]{BerkovichSpectral}). 
A point $x \in \Xan$ is of type II if the residue field $\widetilde{\mathscr{H}}(x)$ of the completed residue field $\mathscr{H}(x)$ at $x$ is an extension 
of $\widetilde{K}$ of transcendence degree $1$. 
We denote by $C_x$ the smooth projective $\widetilde{K}$-curve with function field $\widetilde{\mathscr{H}}(x)$.

\subsection{Thuillier's sheaf of harmonic functions}

We recall Thuillier's notion of lisse functions, $1$-forms and harmonic functions 
on $\Xan$ for a smooth curve $X$ and their properties.

\begin{defn}[\cite{Thuillier}] \label{defn harmonic}
Let $U \subset \Xan$ be an open subset. 
We denote the space of \emph{lisse} functions in the sense of \cite[Section 3.2.1]{Thuillier} by $\AT^0(U)$. 

The space $\AT^1(U)$ is defined as the space of measures on $U$ with discrete support that is contained 
in the set of type II and III points of $\Xan$.

We denote by $\HS_X(U)$ the kernel of the operator $\ddc \colon \AT^0(U) \to \AT^1(U)$ constructed in \cite[Th\'eor\`eme 3.2.10]{Thuillier}.
The elements in $\HS_X(U)$ are called \emph{harmonic} functions on $U$. 
\end{defn}

\begin{bem}
Roughly speaking, a function is lisse if it can be locally 
written as the pullback along the retraction of a piecewise linear 
function on some skeleton. 
It is shown in \cite{Thuillier} that both $U \mapsto \AT^0(U)$ and $U \mapsto \AT^1(U)$ define sheaves 
in the Berkovich topology of $\Xan$ and 
that $\ddc$ is a morphism of sheaves, which implies that $U \mapsto \HS_X(U)$ also defines a sheaf on $\Xan$. 
We consider the sheaves $L^0$ and $L^1$ as sheaves in the Berkovich topology. 
Thuillier denotes these by $\AS^0$ and $\AS^1$, as opposed to the corresponding sheaves in the $G$-topology, 
which he denotes by $A^0$ and $A^1$.
To avoid confusion with sheaves of forms $\AS^{p,q}$, we differ from Thuillier's notation here. 

Also note that this is not the definition of $\HS_X$ that Thuillier gives, but is equivalent by \cite[Corollaire 3.2.11]{Thuillier}.
\end{bem}

Recall that a sheaf $\FS$ on a topological space $X$ is soft if  for every closed subset $B$ of $X$ the map 
$\FS(X) \to \varinjlim \limits_{U \supset B} \FS(U)$, where $U$ are open subsets of $X$ containing $B$, 
is surjective. 
\begin{lem} \label{lem fine sheaves}
The sheaf $\AT^0$ is soft.
\end{lem}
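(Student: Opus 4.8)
The plan is to prove softness of $\AT^0$ by the standard partition-of-unity argument, which requires producing enough lisse bump functions. Recall that a sheaf on a paracompact space is soft as soon as it is \emph{fine} in the sense that it admits partitions of unity subordinate to any locally finite open cover; so it suffices to show that for any two disjoint closed subsets $B_1, B_2$ of $\Xan$ there is a lisse function that equals $1$ near $B_1$ and $0$ near $B_2$, and more generally that lisse partitions of unity exist. Concretely, I would first reduce to a local statement: since $\Xan$ is paracompact and Hausdorff, and since softness is local (a sheaf is soft iff its restriction to every member of a locally finite closed cover is soft, or one argues directly via extending a section over a neighborhood of $B$ by patching), the crux is to construct, for a point $x \in \Xan$ and a fundamental system of neighborhoods, lisse functions with prescribed support behavior.

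The key input is Thuillier's description of lisse functions: locally on $\Xan$, a function is lisse if it is the pullback under a retraction $\tau \colon U \to \Gamma$ onto a (locally) finite skeleton $\Gamma$ of a piecewise-linear function on $\Gamma$. On such a metric graph $\Gamma$, piecewise-linear functions are abundant — one has genuine "tent" or "distance" functions $\min(\dist(\cdot, v), c)$, and these are piecewise linear, hence lisse after pullback. Thus the second step is: given an open $U$ and a compact $B \subset U$, choose a skeleton $\Gamma$ and retraction $\tau$ adapted to a neighborhood of $B$ so that $\tau(B)$ is contained in a compact subgraph $\Gamma_0$ whose closure in $\Gamma$ still lies in $\tau(U)$; build a PL function $\varphi$ on $\Gamma$ that is $1$ on $\Gamma_0$ and supported in $\tau(U)$; then $\tau^*\varphi$ is lisse, equals $1$ on a neighborhood of $B$, and is supported in $U$. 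This lets one extend any germ of a lisse function near $B$ to a global lisse function by multiplying by such a cutoff — giving exactly the surjectivity in the definition of soft.

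The main obstacle, and the step to be careful about, is the global patching: different points of $\Xan$ see different skeleta and retractions, and one must glue locally constructed lisse bump functions into a single global lisse function. The resolution is that the class of lisse functions is closed under the relevant operations — sums, products, and $\min$/$\max$ of lisse functions are lisse (this is part of Thuillier's formalism, since PL functions on skeleta are closed under these and retractions are compatible on overlaps via the standard refinement of skeleta) — and that $\Xan$ being homotopy equivalent to a finite graph, and in particular paracompact of finite topological dimension, admits locally finite covers by domains each carrying a retraction onto a skeleton. So I would: (i) fix a closed $B \subset \Xan$ and a lisse function $f$ defined on some open $U \supset B$; (ii) by paracompactness choose an open $V$ with $B \subset V \subset \overline V \subset U$ and $\overline V$ covered by finitely many (or locally finitely many) skeleton-charts; (iii) on this cover build a lisse cutoff $\psi$ equal to $1$ on a neighborhood of $B$ and supported in $V$, using the tent-function construction on each chart and gluing via closedness under $\min$; (iv) set $g := \psi f$ on $U$ and $g := 0$ outside $\supp \psi$, which is a well-defined global lisse function restricting to $f$ near $B$. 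This establishes that $\AT^0(\Xan) \to \varinjlim_{U \supset B} \AT^0(U)$ is surjective, i.e.\ $\AT^0$ is soft.

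Alternatively — and perhaps more efficiently — one may simply cite that Thuillier proves $\AT^0$ is a fine sheaf, or that the analogous softness statement is implicit in \cite[Section 3.2]{Thuillier} where partitions of unity by lisse functions are used to develop the $\ddc$-theory; in that case the proof reduces to: \emph{By \cite[Section 3.2]{Thuillier}, the sheaf $\AT^0$ admits lisse partitions of unity subordinate to any locally finite open cover of $\Xan$; since $\Xan$ is paracompact, this implies $\AT^0$ is soft.} I would present the partition-of-unity construction sketched above as the honest argument in case the literature reference is not sufficiently explicit, flagging the closedness of lisse functions under $\min$, $\max$, and multiplication as the technical facts drawn from Thuillier's formalism.
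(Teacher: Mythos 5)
Your overall strategy -- softness via Thuillier's partition machinery for lisse functions -- is the right one and is essentially what the paper does, but there is a genuine gap in the concrete implementation: the class of lisse functions is \emph{not} closed under products. A lisse function is locally the pullback along a retraction of a piecewise linear function on a skeleton, and the product of two piecewise linear functions is piecewise \emph{quadratic}; equivalently, $\ddc(\psi f)$ would acquire a continuous part and so would fail to be a discrete measure, contradicting the fact that $\ddc$ maps $\AT^0$ to $\AT^1$. Your step (iv), setting $g := \psi f$ for a lisse cutoff $\psi$, therefore does not produce an element of $\AT^0$, and the sentence asserting that ``sums, products, and $\min$/$\max$ of lisse functions are lisse'' is false for products. (Sums and $\min$/$\max$ are fine, so your construction of the cutoff $\psi$ itself via tent functions on skeleta is unobjectionable; it is only the multiplication by $f$ that breaks.)

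The fix is to replace the multiplicative cutoff by an \emph{additive} decomposition of $f$ itself, which is exactly what the paper does: applying the argument of Thuillier's Lemme~3.3.5 to the cover $\US = \{U_1, U\setminus B\}$ with $B \subset U_1 \subset \overline{U}_1 \subset U$, one obtains a locally finite refinement $\VS$ and compactly supported lisse functions $f_V$ with $\supp(f_V)\subset \overline V$ and $\sum_{V\in\VS} f_V = f$; then $g := \sum_{V\subset U_1} f_V$ is lisse, supported away from $\del U_1$, extends by zero to all of $\Xan$, and agrees with $f$ near $B$. So your ``alternative'' of citing Thuillier is close to correct, but the statement you need from him is a partition \emph{of the function} $f$ into compactly supported lisse pieces, not a multiplicative partition of unity.
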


\begin{proof}
Let $B \subset \Xan$ be a closed subset.
Take $f \in \AT^0(U)$ for an open subset $U$ containing $B$.
Take $U_1$ to be another neighborhood of $B$ that furthermore satisfies $\overline{U}_1 \subset U$. 
Let $U_2 := U \setminus B$ and $\US := \{ U_1,U_2\}$. 
Using the identical arguments as in the proof of \cite[Lemme 3.3.5]{Thuillier} 
we find a locally finite refinement $\VS$ of $\US$ and for all $V \in \VS$ a lisse function $f_V \in L^0(U)$ with compact support, 
such that $\supp(f_V) \subset \overline{V}$ and $\sum_{V \in \VS} f_V = f$. 
Now taking $g = \sum \limits_{V \in \VS, V \subset U_1 } f_V$ gives a lisse function on $U_1$ that has 
support away from the boundary of $U_1$.
Thus $g$ may be extended by zero to a lisse function on all of $X$.
Further it agrees with $f$ on a neighborhood of $B$, as was required.  
\end{proof}

\subsection{Tropical Dolbeault cohomology}

Chambert-Loir and Ducros introduced a double complex $(\AS^{\bullet, \bullet}, d', d'')$ of sheaves of smooth 
real differential forms on Berkovich analytic spaces \cite{CLD}. 
If $X$ is a curve over a non-archimedean field, then we have $\AS^{p,q} = 0$ unless $0 \leq p \leq 1$ and $0 \leq q \leq 1$. 
Elements of $\AS^{0,0}$ are functions.  
We call these functions \emph{smooth} and write $C^\infty := \AS^{0,0}$. 

\begin{defn}
For all $p,q$ we denote $\AS^{p,q,\cl} := \ker(d'' \colon \AS^{p,q} \to \AS^{p,q+1})$. 

Furthermore we denote $\Aff := \ker(d'd'' \colon C^\infty \to \AS^{1,1})$ and 
call this the sheaf of \emph{affine functions}. 

We also define the \emph{tropical Dolbeault cohomology} of $X$ to be
\begin{align*}
\HH^{p,q}(\Xan) := \frac {\AS^{p,q,\cl}(\Xan)} {d''(\AS^{p,q-1}(\Xan))}. 
\end{align*}
\end{defn}

\begin{bem} \label{bem singular cohomology}
We have $\HH^{p,q}(\Xan) \cong \HH^q(\Xan, \AS^{p,0,\cl})$ by \cite[Corollary 3.4.6]{JellThesis} and 
$\AS^{0,0,\cl} \cong \underline{\R}$ by \cite[Lemma 3.4.5]{JellThesis}. 
This implies $\HH^{0,q}(\Xan) \cong \HH^{q}_{\sing}(\Xan, \R)$, the $q$-th singular cohomology
of $\Xan$ with values in $\R$. 
\end{bem}

The double complex $\AS^{\bullet,\bullet}$ admits a wedge-product and 
if $X$ is of dimension one, canonical integration of $(1,1)$-forms with compact support. 
A version of Stokes' theorem ensures that the induced pairing descends to cohomology \cite[Theorem 5.17]{Gubler}. 
In particular, we obtain for a projective curve $X$ pairings
\begin{align} \label{PD pairings}
&\HH^{0,0}(\Xan) \times \HH^{1,1}(\Xan) \to \R \text{ and} \\
\nonumber &\HH^{1,0}(\Xan) \times \HH^{0,1}(\Xan) \to \R.
\end{align}

\begin{defn} \label{defn PD}
We say that \emph{Poincar\'e duality holds for $X$} if the pairings in (\ref{PD pairings}) are perfect pairings. 
\end{defn}

%------------------------------------------------------------------------------------------
\section{Harmonic functions, affine functions and $(1,1)$-tropical Dolbeault cohomology}

In this section we prove the existence of certain exact sequences relating the cohomology of the sheaves 
$\KS$ and $\HS_X$ to tropical Dolbeault cohomology.

\subsection{Sheaf cohomology of $\HS_X$}

In this section $X$ is a smooth curve over $K$.

\begin{lem}  \label{lem surjective sheaf}
Let $X$ be a smooth curve. 
The sequence
\begin{align} \label{seq resolution}
0 \to \HS_X \to \AT^0 \overset{\ddc}{\to} \AT^1 \to 0 
\end{align}
is exact.
\end{lem}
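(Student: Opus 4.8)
The plan is to establish exactness of the sequence
\[
0 \to \HS_X \to \AT^0 \overset{\ddc}{\to} \AT^1 \to 0
\]
pointwise on stalks, or rather by checking the three conditions separately. Exactness at $\HS_X$ is immediate: $\HS_X$ was \emph{defined} as $\ker(\ddc \colon \AT^0 \to \AT^1)$, so the inclusion $\HS_X \hookrightarrow \AT^0$ is injective with image exactly the kernel of $\ddc$. Thus the only real content is surjectivity of $\ddc \colon \AT^0 \to \AT^1$ as a map of sheaves, i.e.\ surjectivity on stalks, equivalently: for every point $x \in \Xan$ and every lisse $1$-form (discretely supported measure) $\omega$ defined near $x$, there is, after shrinking the neighborhood, a lisse function $f$ with $\ddc f = \omega$.

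The key steps are as follows. First I would recall from \cite{Thuillier} the local structure of $\Xan$ near a point: a small enough neighborhood $U$ of $x$ retracts onto a finite metric graph (a skeleton $\Gamma$ with leaves corresponding to the boundary), and lisse functions on $U$ are precisely pullbacks under this retraction of piecewise-linear functions on $\Gamma$, while $\AT^1(U)$ consists of measures supported on finitely many type II/III points, which after retracting correspond to finitely many points of $\Gamma$. Second, I would reduce the problem to a purely combinatorial one on the finite graph $\Gamma$: given an atomic measure $\mu = \sum a_i \delta_{p_i}$ on $\Gamma$, find a piecewise-linear function $g$ on $\Gamma$ whose Laplacian (in the graph/metric-graph sense, which is exactly how $\ddc$ acts after pullback, by \cite[Th\'eor\`eme 3.2.10]{Thuillier}) equals $\mu$. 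Third, I would solve this: on a finite metric graph with nonempty boundary (leaves), the Laplacian from PL functions to atomic measures is surjective — one can integrate $\mu$ along the tree/graph starting from the boundary, adjusting slopes edge by edge; the presence of at least one boundary leaf (which the retraction of a proper open $U \subsetneq \Xan$ always provides) removes the only obstruction, namely that on a graph \emph{without} boundary the total mass of $\mu$ would have to vanish. Concretely, on each edge a PL function has an integer-combination of breakpoints, and prescribing the jump in slope at each interior vertex to match the atomic mass there, together with a balancing condition at the boundary, is always solvable when the boundary is nonempty. Finally, one checks the constructed $g$ pulls back to a lisse function $f$ on $U$ with $\ddc f = \omega$, which gives surjectivity on stalks and hence as sheaves.

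Alternatively — and perhaps more cleanly — one avoids the explicit graph computation by observing that $\AT^1$ is a flasque or at least soft sheaf supported on a discrete set of points, and combining Lemma \ref{lem fine sheaves} (softness of $\AT^0$) with the fact that $\ddc$ is surjective at the level of sections over a cofinal system of neighborhoods of each type II/III point, which is essentially the statement that the local Dirichlet/Laplace problem on the "star" of such a point is solvable; this is exactly the content of \cite[Th\'eor\`eme 3.2.10]{Thuillier} and the surrounding discussion of Thuillier's theory. Either way, the substance is local solvability of the Laplace equation $\ddc f = \omega$ on small neighborhoods.

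The main obstacle I anticipate is the bookkeeping in the reduction to the finite graph and the precise matching of $\ddc$ with the combinatorial Laplacian: one must be careful that "lisse" near a \emph{non}-type-II point (type I, III, or IV) still produces the right local picture, that the support condition on $\AT^1$ (type II and III points only) is compatible with the breakpoints available to PL functions on the skeleton, and that shrinking $U$ to ensure a nonempty boundary does not lose any of the prescribed mass. Once the local model is set up correctly, the surjectivity is a short argument about integrating a measure on a metric graph with leaves; the potential for error lies entirely in invoking Thuillier's local structure theorems with the correct hypotheses rather than in any deep new idea.
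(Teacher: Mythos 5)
Your proposal is correct in substance but takes a genuinely different route from the paper. You correctly identify that the only content is surjectivity of $\ddc$ on stalks, and you propose to prove it by reducing to the combinatorial Poisson problem on a finite metric graph with leaves, where the total-mass obstruction disappears because the boundary is nonempty; this works, and your anticipated bookkeeping issues (matching $\ddc$ with the graph Laplacian via the retraction, breakpoints only at type II/III points) are exactly the ones Thuillier's Th\'eor\`eme 3.2.10 resolves. The paper instead sidesteps all local constructions: it first observes that the stalk $\AT^1_x$ is \emph{zero} for $x$ of type I or IV and is spanned by the single Dirac measure $\delta_x$ for $x$ of type II or III (because elements of $\AT^1$ have discrete support, so shrinking the neighborhood kills everything except the mass at $x$ itself), and then invokes Thuillier's global existence result \cite[Proposition 3.3.7]{Thuillier}, which produces $g \in \AT^0(\overline{X}^{\an})$ with $\ddc g = \delta_x - \delta_y$ for a second point $y \neq x$; restricting $g$ to a small neighborhood of $x$ discards $\delta_y$ and gives $\ddc g_x = \delta_x$ on the stalk. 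Your approach is more self-contained (it essentially reproves a local case of Proposition 3.3.7 by hand) at the cost of more skeleton bookkeeping; the paper's is shorter because it outsources the analysis to an off-the-shelf Green's function. One small correction: your "alternative" route attributes the local solvability of $\ddc f = \omega$ to Th\'eor\`eme 3.2.10, but that theorem only \emph{constructs} the operator $\ddc$; the solvability statement you need is Proposition 3.3.7. Also note that exploiting the discreteness of the support to reduce the stalk to a single multiple of $\delta_x$ would spare you the general finitely-supported case entirely.
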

\begin{proof}
Exactness at $\HS_X$ and $\AT^0$ follows from the definition. 
We check exactness at $\AT^1$ on stalks. 
Take $x \in \Xan$. 
If $x$ is of type I or IV, then $\AT^1_x$ is zero, so we are done. 

If $x$ is of type II or III, the real vector space $\AT^1_x$ is spanned by $\delta_x$, the Dirac measure at $x$. 
Now take any other type II or III point $y \in \Xan$. 
Denote by $\overline{X}$ the canonical smooth compactification of $X$. 
By \cite[Proposition 3.3.7]{Thuillier} there exists a function $g \in \AT^0({\overline{X}}^{\an})$ such that 
$\ddc g = \delta_x - \delta_y$. 
Now the stalk $g_x$ of $g|_{\Xan}$ at $x$ satisfies $\ddc(g_x) = \delta_x$.
\end{proof}

\begin{prop} \label{prop harmonic cohomology}
Let $X$ be a smooth projective curve. 
Then there are canonical isomorphisms $\HH^0(\Xan, \HS_X) \cong \R$ and $\HH^1(\Xan, \HS_X) \cong \R$.
\end{prop}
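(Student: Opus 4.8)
The plan is to use the soft resolution $0 \to \HS_X \to \AT^0 \xrightarrow{\ddc} \AT^1 \to 0$ from Lemma \ref{lem surjective sheaf} to compute the cohomology of $\HS_X$ via the two-term complex of global sections. Since $\AT^0$ is soft by Lemma \ref{lem fine sheaves}, and $\AT^1$ is a sheaf of measures with discrete support which one expects to be soft as well (a partition-of-unity argument, or the fact that one can always extend a discretely-supported measure from a neighborhood of a closed set), both sheaves are acyclic on the compact space $\Xan$. Hence $\HH^i(\Xan, \HS_X)$ is computed as the $i$-th cohomology of the complex $\AT^0(\Xan) \xrightarrow{\ddc} \AT^1(\Xan)$, so that $\HH^0(\Xan, \HS_X) = \ker(\ddc)$ on global sections and $\HH^1(\Xan, \HS_X) = \coker(\ddc)$ on global sections, with all higher cohomology vanishing.

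For $\HH^0$: a global harmonic function on the compact space $\Xan$ attains its maximum, and by the maximum principle for harmonic functions (Thuillier, \cite{Thuillier}, or the fact that $\ddc$ of a non-constant lisse function is nonzero near a point where a suitable local maximum along a skeleton is achieved) it must be constant. Thus $\HH^0(\Xan, \HS_X) \cong \R$, the constants. This gives the first canonical isomorphism.

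For $\HH^1$: I must identify the cokernel of $\ddc \colon \AT^0(\Xan) \to \AT^1(\Xan)$. A global section of $\AT^1$ is a discretely supported measure on $\Xan$; since $\Xan$ is compact and the support is discrete, it is a finite $\R$-linear combination of Dirac measures $\sum a_i \delta_{x_i}$ at type II/III points, i.e.\ $\AT^1(\Xan)$ is identified with the finitely-supported divisor-like group with real coefficients. The key computation is that such a measure is in the image of $\ddc$ if and only if its total mass $\sum a_i$ is zero: this is precisely the content of \cite[Proposition 3.3.7]{Thuillier} invoked in the proof of Lemma \ref{lem surjective sheaf}, which produces $g$ with $\ddc g = \delta_x - \delta_y$, so all mass-zero measures are hit; conversely the total mass functional $\mu \mapsto \mu(\Xan)$ vanishes on every $\ddc g$ by the global version of Stokes/the divisor-degree argument (the measure $\ddc g$ for lisse $g$ on a compact curve has total mass $0$). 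Hence the total-mass map induces an isomorphism $\coker(\ddc) \xrightarrow{\sim} \R$, giving $\HH^1(\Xan, \HS_X) \cong \R$.

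The main obstacle I anticipate is justifying that $\AT^1$ is soft (or otherwise acyclic) and carefully pinning down the global sections $\AT^1(\Xan)$ together with the statement that $\ddc$ of a lisse function always has total mass zero on a compact curve; these are the points where I would lean hardest on the precise results in \cite{Thuillier} rather than reprove them. The maximum principle for $\HH^0$ and the surjectivity onto mass-zero measures for $\HH^1$ are comparatively routine once those foundational facts are in place.
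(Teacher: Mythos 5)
Your proposal is correct and follows essentially the same route as the paper: the resolution $0 \to \HS_X \to \AT^0 \overset{\ddc}{\to} \AT^1 \to 0$ with $\AT^0$ soft, the maximum principle for $\HH^0$, and the total-mass functional together with Thuillier's Proposition 3.3.7 for the cokernel. The one point you flag as a potential obstacle --- softness of $\AT^1$ --- is in fact not needed: the long exact sequence segment $\AT^1(\Xan) \to \HH^1(\Xan, \HS_X) \to \HH^1(\Xan, \AT^0) = 0$ already identifies $\HH^1(\Xan, \HS_X)$ with $\coker(\ddc)$ using only the acyclicity of $\AT^0$.
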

\begin{proof}
Since $\Xan$ is a paracompact and Hausdorff topological space, soft sheaves on $\Xan$ are acyclic.
Thus we have $\HH^1(\Xan, \AT^0) = 0$ by Lemma \ref{lem fine sheaves}. 
Looking at the long exact sequence in cohomology associated to (\ref{seq resolution}) we now find that
\begin{align*}
0 \to \HH^0(\Xan, \HS_X) \to \AT^0(\Xan) \to \AT^1(\Xan) \to \HH^1(\Xan, \HS_X) \to 0
\end{align*}
is exact. 
Thus, we have to calculate the kernel and cokernel of 
$\ddc \colon \AT^0(\Xan) \to \AT^1(\Xan)$. 
The kernel is $\HS_X(\Xan)$, which consists of constant functions by the maximum principle \cite[Proposition 3.1.1]{Thuillier}. 

The isomorphism of the cokernel with $\R$ is given by total mass
\begin{align*}
\int \colon \AT^1(\Xan) \to \R; \;\; \mu \mapsto \int 1 \; \mu.
\end{align*}
For $f \in \AT^0(\Xan)$ we have $\int 1 \ddc f = \int f \ddc 1 = 0$
by \cite[Proposition 3.2.12]{Thuillier}. 
Now let $\mu = \sum_{i = 0}^{k} \lambda_i \delta_{x_i} \in \AT^1(\Xan)$ be a discrete measure of mass $0$. 
Then by \cite[Proposition 3.3.7]{Thuillier} there exist functions $g_i := g_{x_ix_0} \in \AT^0(\Xan)$ such that $\ddc (g_i ) = \delta_{x_i} - \delta_{x_0}$.
Write $g := \sum_{i=1}^k \lambda_i g_i$. 
We obtain $\ddc(g) = \sum_{i = 1}^k \lambda_i \delta_{x_i} + (\sum_{i = 1}^{k} - \lambda_i) \delta_{x_0}$. 
Since $\mu$ was of mass zero, we have $\lambda_0 = \sum_{i = 1}^{k} - \lambda_i$ which shows  $\mu = \ddc g$. 
Thus integration induces an isomorphism $\AT^1(\Xan) / \ddc \AT^0(\Xan)$ with $\R$, which is precisely what we wanted to show.
\end{proof}

\subsection{Relation between $\HS_X$ and $\Aff$}

In this section $X$ is again a smooth curve over $K$.

\begin{defn} \label{defn G(X)}
We write $G(X)$ for the set of type II points $x$ of $\Xan$ where the genus of the $\Ktilde$-curve $C_x$ is positive. 
We define the sheaf
\begin{align} \label{eq: defn SX}
\Scal_X := \bigoplus_{x \in G(X)} \iota_{x, *} (\Pic^0(C_x) \otimes_\Z \R),
\end{align}
where $\iota_x \colon \{x\} \to \Xan$ denotes the inclusion, 
and the real vector space
\begin{align} \label{eg: defn SXII}
S_X := \bigoplus_{x \in G(X)}  (\Pic^0(C_x) \otimes_\Z \R) = \Scal_X(\Xan).
\end{align}
\end{defn}

\begin{bem}
The set $G(X)$ is finite \cite[Remark 4.18]{BPR2}. 
We could let the sums in (\ref{eq: defn SX}) and (\ref{eg: defn SXII}) run over all type II points on $\Xan$, 
since if $x \notin G(X)$ we have $C_x \cong \mathbb{P}^1_{\widetilde{K}}$ and consequently $\Pic^0(C_x) = 0$. 
\end{bem}

\begin{defn}
A model $\XS$ of $X$ over $K^\circ$ is called $\emph{semistable}$ if the special fiber
$\XS_s := \XS \times_{K^\circ} \Ktilde$ has only ordinary double points as singularities. 
It is called \emph{strictly semistable} if all irreducible components of $\XS_s$ are smooth. 
\end{defn}

\begin{bem} \label{remark semistable}
Let $X$ be a smooth projective curve over $K$
and $\XS$ a semistable model of $X$ over $K^\circ$. 
There exists a canonical reduction map $\red \colon \Xan \to \XS_s$. 

Let $C$ be an irreducible component of $\XS_s$ with generic point $\eta$. 
Then $\eta$ has a unique preimage under $\red$ \cite[Theorem 4.6]{BPR2}. 
This preimage, denoted by $x$, is a type II point such that $C_x \cong C'$, where $C'$ is the normalization of $C$. 
For all points of type II that do not appear in this way we have $C_x \cong \mathbb{P}^1_{\Ktilde}$ 
\cite[Remarks 4.17 \& 4.18]{BPR2}.

In other words
\begin{align} \label{eq semistable}
S_X = \bigoplus_{\text{irrd. comp. } C \text{ of } \XS_s} \Pic^0(C') \otimes_\Z \R
\end{align}
for every semistable model $\XS$ of $X$. 
\end{bem}

\begin{bem} \label{H to S}
Thuillier defines a map $\HS_X \to \Scal_X$ as follows \cite[Proof of Lemma 2.3.22]{Thuillier}:
The tangent directions at a type II point $x \in \Xan$ are in canonical bijection 
with $C_x(\Ktilde)$. 
Thus for a harmonic function $h$ defined in a neighborhood of $x$ there is 
an associated $\R$-divisor $\divisor_x(h) := \sum d_{x, v_p}(h) \cdot p \in \Div(C_x) \otimes \R$, 
where $v_p$ denotes the tangent direction associated to $p$ and $d_{x, v_{p}}(h)$ 
denotes the slope of $h$ in the direction of $v_p$. 
Since $h$ is harmonic at $x$, we have $\divisor_x(h) \in \Div^0(C_x) \otimes \R$.
We thus obtain a map $\HS_{X, x} \to \Pic^0(C_x) \otimes \R = \Scal_{X,x}$ on stalks
by mapping $h$ to the class of $\divisor_x(h)$ in $\Pic^0(C_x) \otimes \R$. 
These maps glue to define a map $\HS_X \to \Scal_X$.  
\end{bem}

\begin{prop} \label{thm harmonic sequence}
Let $X$ be a smooth curve. 
The sequence of sheaves 
\begin{align} \label{seq harmonic}
0 \to \Aff \to \HS_X \to \Scal_X \to 0
\end{align}
is exact, where the map $\HS_X \to \Scal_X$ is the one constructed in Remark \ref{H to S}.
\end{prop}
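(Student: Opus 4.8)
The plan is to verify exactness of \eqref{seq harmonic} stalkwise at an arbitrary point $x\in\Xan$, distinguishing the point types. Exactness amounts to three things: injectivity of $\Aff\to\HS_X$, exactness in the middle, and surjectivity of $\HS_X\to\Scal_X$. Injectivity is immediate since $\Aff$ is by definition a subsheaf of $C^\infty$ and affine functions are in particular harmonic (one should first record this inclusion $\Aff\subset\HS_X$: an affine function is locally a pullback of a piecewise-linear function along a retraction to a skeleton, hence lisse, and the condition $d'd''f=0$ translates into harmonicity, i.e.\ $\ddc f=0$, via the comparison between the Chambert-Loir--Ducros operators and Thuillier's $\ddc$ on curves). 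For the stalk at $x$: if $x$ is of type I or IV, then $\Scal_{X,x}=0$ and one must show $\Aff_x=\HS_{X,x}$, i.e.\ every germ of harmonic function at such a point is affine; if $x$ is of type II or III, one must identify the kernel of $\divisor_x\colon\HS_{X,x}\to\Pic^0(C_x)\otimes\R$ with $\Aff_x$ and show $\divisor_x$ is surjective onto $\Pic^0(C_x)\otimes\R$.

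First I would treat surjectivity of $\HS_X\to\Scal_X$. Since $G(X)$ is finite and $\Scal_X$ is a direct sum of skyscraper sheaves, it suffices to show that for each $x\in G(X)$ the slope map $\divisor_x\colon\HS_{X,x}\to\Div^0(C_x)\otimes\R$ hits every class in $\Pic^0(C_x)\otimes\R$. Given a point $p\in C_x(\Ktilde)$, it corresponds to a tangent direction $v_p$ at $x$, hence to a branch of $\Xan$ at $x$; choosing another point $q$ similarly, I would invoke \cite[Proposition 3.3.7]{Thuillier} (already used in Lemma \ref{lem surjective sheaf} and Proposition \ref{prop harmonic cohomology}) to produce a harmonic function $g$ with $\ddc g=\delta_{x'}-\delta_{y'}$ for suitable nearby type II points, arranged so that near $x$ the function $g$ has prescribed slopes realizing $p-q$; more robustly, one uses that harmonic functions on a small star-shaped neighborhood of $x$ with a single extra skeleton edge in direction $v_p$ can have arbitrary real slope along that edge while being locally constant elsewhere. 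Taking $\R$-linear combinations realizes every degree-zero $\R$-divisor, so $\divisor_x$ is surjective; composing with the projection to $\Pic^0$ gives surjectivity onto the stalk $\Scal_{X,x}$.

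For exactness in the middle, I would show $\ker(\divisor_x)=\Aff_x$ at a type II point and, at type I/III/IV points, that $\HS_{X,x}=\Aff_x$. The inclusion $\Aff_x\subseteq\ker(\divisor_x)$ (when $x$ is type II) should follow because an affine function near $x$, being pulled back from a piecewise-linear function on a skeleton through $x$, has balanced slopes whose associated $\R$-divisor is not merely degree zero but principal — in fact the zero class, since the combinatorial ``balancing'' that $\Aff$ imposes is exactly the vanishing in $\Pic^0$; here I would lean on \cite[Lemme 2.3.22]{Thuillier} and the identification in \cite[Theorem 5.2.4]{Wanner}, which the introduction already cites for precisely the claim that $\HS_X/\KS$ (hence $\HS_X/\Aff$) is the skyscraper sheaf $\Scal_X$. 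Conversely, a harmonic germ $h$ with $\divisor_x(h)=0$ in $\Pic^0(C_x)\otimes\R$ can be corrected by a piecewise-linear (affine) function so that all slopes vanish, whence $h$ is locally constant plus affine, i.e.\ $h\in\Aff_x$; at type III points $C_x$ does not appear and harmonicity already forces the two slopes to be opposite, which is the affine condition, while at type I and IV points there is no nontrivial local structure and harmonic germs are constant.

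The main obstacle I anticipate is the precise translation between the Chambert-Loir--Ducros sheaf $\Aff=\ker(d'd''\colon C^\infty\to\AS^{1,1})$ and Thuillier's harmonic-plus-$\Pic^0$ description: one must know that a smooth function with $d'd''f=0$ is automatically lisse (so that it even lands in $\AT^0$ and the comparison makes sense), and that under the identification the quotient $\HS_X/\Aff$ is exactly $\Scal_X$ with the slope map as connecting map. This is the content of \cite[Lemme 2.3.22]{Thuillier} combined with \cite[Theorem 5.2.4]{Wanner}, so the ``proof'' is really an assembly of these two inputs with the stalkwise bookkeeping above; I would state it that way rather than reprove the comparison from scratch.
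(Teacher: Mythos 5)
Your proposal lands in exactly the same place as the paper: the proof is precisely the assembly of \cite[Lemme 2.3.22]{Thuillier} (exactness of $0 \to \R \log \vert \OS_X^\times \vert \to \HS_X \to \Scal_X \to 0$, with the same slope/divisor map) with \cite[Theorem 5.2.4]{Wanner} (the identification $\Aff \cong \R \log \vert \OS_X^\times \vert$), which is what your final paragraph says you would do. The preceding stalkwise sketch is therefore redundant, and a couple of its claims are imprecise as stated (a lisse germ with nonzero slope along a single edge and locally constant elsewhere is not harmonic at the central type II point, since the degree of its divisor would be nonzero; and harmonic germs at type I and IV points need not be constant, only affine), but since you explicitly defer to the two citations for the actual argument this does not affect the correctness of the proof.
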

\begin{proof}
It is shown in \cite[Lemme 2.3.22]{Thuillier} that 
\begin{align*}
0 \to \R \log \vert \OS_X^\times \vert \to \HS_X \to \Scal_X \to 0
\end{align*}
is exact. 
Here $\R \log \vert \OS_X^\times \vert$ denotes the sheaf of real-valued functions that can be locally 
written as $\sum_{i =1}^{k} \lambda_i \cdot \log \vert f_i \vert$ for $\lambda_i \in \R$ and $f_i$ invertible functions. 
It is then shown in \cite[Theorem 5.2.4]{Wanner} that $\Aff \cong \R \log \vert \OS_X^\times \vert$. 
\end{proof}

\begin{kor} \label{kor harmonic sequence}
Let $X$ be a smooth projective curve. 
The sequence 
\begin{align*}
0 \to S_X  \to \HH^1(\Xan, \Aff) \to \HH^1(\Xan, \HS_X) \to 0
\end{align*}
of real vector spaces is exact. 
In particular, we have $ \dim \HH^1(\Xan, \Aff) = \dim S_X + 1$.
\end{kor}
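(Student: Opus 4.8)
The plan is to derive the stated short exact sequence directly from the long exact cohomology sequence associated to the short exact sequence of sheaves (\ref{seq harmonic}) from Proposition \ref{thm harmonic sequence}, using the computation of the cohomology of $\HS_X$ from Proposition \ref{prop harmonic cohomology}. First I would write out the long exact sequence
\begin{align*}
0 \to \HH^0(\Xan, \Aff) \to \HH^0(\Xan, \HS_X) \to \HH^0(\Xan, \Scal_X) \to \HH^1(\Xan, \Aff) \to \HH^1(\Xan, \HS_X) \to \HH^1(\Xan, \Scal_X) \to \cdots
\end{align*}
and then identify each term. By Proposition \ref{prop harmonic cohomology} we have $\HH^0(\Xan, \HS_X) \cong \R$ and $\HH^1(\Xan, \HS_X) \cong \R$. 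Since $\Scal_X$ is a finite direct sum of skyscraper sheaves $\iota_{x,*}(\Pic^0(C_x)\otimes_\Z \R)$ supported at points (the set $G(X)$ being finite by \cite[Remark 4.18]{BPR2}), it is flasque, hence acyclic, so $\HH^1(\Xan, \Scal_X) = 0$ and $\HH^0(\Xan, \Scal_X) = S_X$ by (\ref{eg: defn SXII}). Thus the six-term exact sequence
\begin{align*}
0 \to \HH^0(\Xan, \Aff) \to \R \to S_X \to \HH^1(\Xan, \Aff) \to \R \to 0
\end{align*}
remains.

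The remaining point is to identify $\HH^0(\Xan, \Aff)$ and to check that the map $\R = \HH^0(\Xan,\HS_X) \to \HH^0(\Xan,\Scal_X) = S_X$ is zero, so that the sequence splits off the desired four-term piece. For the first, $\HH^0(\Xan, \Aff) = \Aff(\Xan)$ is the space of global affine functions; on a projective (hence compact, connected) curve these are constant by the maximum principle — indeed, $\Aff \subset \HS_X$ and $\HH^0(\Xan, \HS_X) \cong \R$ consists of constants by \cite[Proposition 3.1.1]{Thuillier}, so $\HH^0(\Xan,\Aff) \cong \R$ as well. Then the map $\HH^0(\Xan,\Aff) \to \HH^0(\Xan,\HS_X)$ is an isomorphism $\R \xrightarrow{\sim} \R$ (both are the constants, and the sheaf map $\Aff \to \HS_X$ is the inclusion). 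By exactness the connecting-type map $\HH^0(\Xan,\HS_X) \to \HH^0(\Xan,\Scal_X) = S_X$ therefore has trivial image, which yields the short exact sequence
\begin{align*}
0 \to S_X \to \HH^1(\Xan, \Aff) \to \HH^1(\Xan, \HS_X) \to 0.
\end{align*}

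Finally, combining this with $\HH^1(\Xan, \HS_X) \cong \R$ and the additivity of dimension over short exact sequences of (necessarily, once we know they are finite-dimensional — but here it suffices that $S_X$ is finite-dimensional and the quotient is one-dimensional) real vector spaces gives $\dim_\R \HH^1(\Xan, \Aff) = \dim_\R S_X + 1$.

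I do not expect a serious obstacle here; the only points requiring a little care are the acyclicity of the skyscraper sheaf $\Scal_X$ (immediate, since a direct sum of flasque sheaves is flasque and flasque sheaves are acyclic on any topological space) and the vanishing of the map $\R \to S_X$, which follows formally once one observes that $\HH^0(\Xan,\Aff) \to \HH^0(\Xan,\HS_X)$ is surjective — equivalently, that every global harmonic function on the projective curve, being constant, is already affine. If one prefers to avoid invoking the maximum principle twice, one can instead argue purely formally: the composite $\Aff \hookrightarrow \HS_X$ induces an injection on $\HH^0$ between two spaces each isomorphic to $\R$, hence an isomorphism, and exactness does the rest.
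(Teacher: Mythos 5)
Your proof is correct and follows essentially the same route as the paper: the long exact sequence of (\ref{seq harmonic}), acyclicity of the skyscraper sheaf $\Scal_X$, and the observation that $\HH^0(\Xan,\Aff)\to\HH^0(\Xan,\HS_X)$ is an isomorphism between one-dimensional spaces, forcing the map to $S_X$ to vanish (the paper simply cites \cite[Theorem 3.2.61]{JellThesis} for $\HH^0(\Xan,\Aff)\cong\R$ where you rederive it via the maximum principle). One tiny caveat: your parenthetical about needing $S_X$ finite-dimensional is unnecessary and slightly misleading, since the dimension formula is later applied precisely when $S_X$ is infinite-dimensional, and additivity of dimension over a short exact sequence of vector spaces holds in that generality anyway.
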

\begin{proof}
We consider the long exact cohomology sequence associated to (\ref{seq harmonic}):
\begin{align*}
0 \to &\HH^0(\Xan, \Aff) \to \HH^0(\Xan, \HS_X) \to \HH^0(\Xan, \Scal_X) \to \\
&\HH^1(\Xan, \Aff) \to \HH^1(\Xan, \HS_X) \to \HH^1(\Xan, \Scal_X) \to \dots
\end{align*}
Now we have $\HH^0(\Xan, \Aff) \cong \R$ by \cite[Theorem 3.2.61]{JellThesis} and 
$\HH^0(\Xan, \HS_X) \cong \R$ by Proposition \ref{prop harmonic cohomology}. 
This implies that the map $\HH^0(\Xan, \HS_X) \to \HH^0(\Xan, \Scal_X)$ is zero. 
Further we have $\HH^0(\Xan, \Scal_X) = S_X$, and $\HH^1(\Xan, \Scal_X) = 0$ since $\Scal$ is a finite sum 
of skyscraper sheaves. 
This yields the claimed short exact sequence.

The last statement follows since $\HH^1(\Xan, \HS_X) \cong \R$ by Proposition \ref{prop harmonic cohomology}.
\end{proof}

\subsection{Relation between $\HH^{1,1}(\Xan)$ and $\HH^1(\Xan, \Aff)$}

Recall that we defined $\AS^{1,0, \cl} := \ker(d'' \colon \AS^{1,0} \to \AS^{1,1})$. 

\begin{prop} \label{thm exponential sequence}
The sequence 
\begin{align} \label{exponential sequence}
0 \to \underline{\R} \to \Aff  \overset{d'}{\to} \AS^{1,0,\cl} \to 0
\end{align}
of sheaves on $\Xan$ is exact. 
\end{prop}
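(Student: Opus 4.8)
The plan is to verify exactness of (\ref{exponential sequence}) stalkwise, using the fact that all three sheaves are sheaves in the (ordinary) topology of $\Xan$. Exactness at $\underline{\R}$ is immediate: a constant function $c$ satisfies $d'c = 0$, and conversely if $f$ is an affine function with $d'f = 0$ then locally $f$ is constant because $d'd''f = 0$ already holds and $d'f = 0$ forces $f$ to be locally constant on the (connected) basic neighborhoods; I would cite the description of $\Aff$ via Thuillier's lisse functions and the local structure of $\Xan$ (curves retract onto skeleta, and $d' = 0$ together with $d'' = 0$ on such a skeleton forces local constancy).

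\textbf{Exactness at $\Aff$.} By definition $d' \circ d'' = 0$ on $\Aff$? — more precisely, for $f \in \Aff$ we have $d'd''f = 0$, and I must check that $d'f$ lies in $\AS^{1,0,\cl}$, i.e.\ $d''(d'f) = 0$. Since $d''d' = -d'd''$ on the Chambert-Loir--Ducros double complex, $d''(d'f) = -d'(d''f)$, and the right-hand side need not obviously vanish — but $f$ affine means precisely $d'd''f = 0$, so $d''d'f = 0$ and indeed $d'f \in \AS^{1,0,\cl}$. The kernel of $d' \colon \Aff \to \AS^{1,0,\cl}$ is the sheaf of functions that are both affine and killed by $d'$; by the previous paragraph this is $\underline{\R}$.

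\textbf{Surjectivity of $d' \colon \Aff \to \AS^{1,0,\cl}$.} This is the main obstacle. I need: every local section $\omega$ of $\AS^{1,0,\cl}$ is locally of the form $d'f$ with $f$ affine. First, since $X$ is a curve, $\AS^{p,q} = 0$ outside $0\le p,q\le 1$, and one knows (Remark \ref{bem singular cohomology}, or rather the $d'$-analogue) that $\AS^{1,0,\cl} \cong \AS^{0,1,\cl}$-type statements hold; concretely the relevant fact is that $\HH^1$ of the complex $(\AS^{\bullet,0}, d')$ vanishes locally, i.e.\ on a basis of opens $U$ (contractible onto a segment or tree) every $d'$-closed $(1,0)$-form is $d'$-exact: $\omega = d'f$ for some $f \in C^\infty(U)$. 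This is the Poincaré-lemma-type statement for the Chambert-Loir--Ducros complex on curves, which I would invoke from \cite{JellThesis} (after flipping the bigrading as noted in the paper's notation remark) or re-derive from the tropical Poincaré lemma on the skeleton. It then remains to see that such a primitive $f$ can be chosen affine: applying $d''$, we get $d'd''f = d''(d'f) = d''\omega = 0$ since $\omega$ is $d''$-closed; hence $f \in \Aff(U)$ automatically. So once local $d'$-exactness is granted, the affineness of the primitive is free.

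\textbf{Summary of the argument.} Thus the whole proposition reduces to two inputs: (i) a function annihilated by both $d'$ and $d''$ on a connected basic open is constant — giving exactness at $\underline{\R}$ and computing the kernel at $\Aff$; and (ii) the local $d'$-Poincaré lemma for $(1,0)$-forms on curves — giving surjectivity. Both are available in \cite{JellThesis} (with the bigrading flip) and \cite{CLD}, and I would assemble them with the identity $d'd'' = -d''d'$ to land the primitives inside $\Aff$. I expect step (ii) — locating a clean reference for the local $d'$-Poincaré lemma in the precise form needed, or giving the short direct argument on the skeleton — to be where most of the care goes; everything else is formal.
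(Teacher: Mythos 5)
Your proposal is correct and follows essentially the same route as the paper: the kernel computation comes down to the fact that $d'$-closed smooth functions are locally constant (\cite[Corollary 4.6]{Jell}), and surjectivity comes from the local $d'$-Poincar\'e lemma (\cite[Theorem 4.5]{Jell}, applicable because $\AS^{2,0}=0$ on a curve) together with the observation that any local primitive is automatically affine via $d''d'=-d'd''$. The reference you were worried about locating for step (ii) is exactly \cite[Theorem 4.5]{Jell}; the only blemish is a harmless sign slip where you write $d'd''f = d''(d'f)$ instead of $-d''(d'f)$.
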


Let us mention here that the tropical analogue of this sequence was introduced by Mikhalkin and Zharkov \cite[Definition 4.1]{MikZharII}.
Note here that the functions in $\KS$ are precisely the ones that locally factor as an affine function through a tropicalization.

\begin{proof}
Note that $d'$ indeed maps $\Aff$ to $\AS^{1,0,\cl}$, since if $f \in \Aff$ we have
$d'' d' f = -d' d'' f = 0$. 
We have $\ker ( d' \colon C^\infty \to \AS^{1,0}) = \underline{\R}$ by \cite[Corollary 4.6]{Jell}.
Since $\underline{\R}$ is a subsheaf of $\Aff$ we obtain exactness at $\Aff$. 

We check exactness at $\AS^{1,0,\cl}$ on stalks. 
Let $x \in \Xan$ and $\alpha \in \AS^{1,0,\cl}(U)$ for an open subset $U$ of $\Xan$ containing $x$. 
Then $d' \alpha \in \AS^{2,0}(U) = 0$, thus by \cite[Theorem 4.5]{Jell} there exists 
an open subset $V \subset U$ containing $x$ and $f \in C^{\infty}(V)$
such that $d' f = \alpha$. 
Since $d' d'' f = - d'' \alpha = 0$ we have $f \in \Aff(V)$. 
This shows exactness on the stalk of $x$ at $\AS^{1,0,\cl}$. 
\end{proof}

\begin{kor} \label{kor exponential sequence}
For a smooth projective curve $X$ we obtain the exact sequence 
\begin{align*}
0 \to \HH^{1,0}(\Xan) \to \HH^{0,1}(\Xan) \to \HH^1(\Xan, \Aff) \to \HH^{1,1}(\Xan) \to 0
\end{align*}
of real vector spaces. 
\end{kor}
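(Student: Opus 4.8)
The plan is to extract the long exact sequence in sheaf cohomology from the short exact sequence (\ref{exponential sequence}) of Proposition \ref{thm exponential sequence}, and then identify each term using the general fact from Remark \ref{bem singular cohomology} that $\HH^{p,q}(\Xan) \cong \HH^q(\Xan, \AS^{p,0,\cl})$, together with $\AS^{0,0,\cl} \cong \underline{\R}$. First I would write out the long exact sequence associated to $0 \to \underline{\R} \to \Aff \overset{d'}{\to} \AS^{1,0,\cl} \to 0$:
\begin{align*}
0 \to \HH^0(\Xan, \underline{\R}) \to \HH^0(\Xan, \Aff) \to \HH^0(\Xan, \AS^{1,0,\cl}) \to \HH^1(\Xan, \underline{\R}) \to \HH^1(\Xan, \Aff) \to \HH^1(\Xan, \AS^{1,0,\cl}) \to \HH^2(\Xan, \underline{\R}) \to \cdots
\end{align*}
Then I would translate: $\HH^q(\Xan, \underline{\R}) = \HH^q(\Xan, \AS^{0,0,\cl}) \cong \HH^{0,q}(\Xan)$, so $\HH^0(\Xan,\underline{\R}) \cong \HH^{0,0}(\Xan) \cong \R$, $\HH^1(\Xan,\underline\R) \cong \HH^{0,1}(\Xan)$, and $\HH^2(\Xan,\underline\R) \cong \HH^{0,2}(\Xan) = 0$ since $\Xan$ is homotopy equivalent to a one-dimensional complex. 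Likewise $\HH^q(\Xan, \AS^{1,0,\cl}) \cong \HH^{1,q}(\Xan)$, so the $\HH^0$ term is $\HH^{1,0}(\Xan)$ and the $\HH^1$ term is $\HH^{1,1}(\Xan)$.

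Next I would pin down the low-degree part. We have $\HH^0(\Xan, \Aff) \cong \R$ by \cite[Theorem 3.2.61]{JellThesis} (also cited in the proof of Corollary \ref{kor harmonic sequence}), and the map $\HH^0(\Xan, \underline{\R}) \to \HH^0(\Xan, \Aff)$ is the inclusion of constants, hence an isomorphism $\R \xrightarrow{\sim} \R$. Therefore the connecting-type portion $\HH^0(\Xan,\Aff) \to \HH^0(\Xan,\AS^{1,0,\cl})$ composed back — more precisely, exactness at $\HH^0(\Xan,\Aff)$ forces the map $\HH^0(\Xan, \Aff) \to \HH^0(\Xan, \AS^{1,0,\cl}) = \HH^{1,0}(\Xan)$ to be zero, so the sequence breaks off and
\begin{align*}
0 \to \HH^{1,0}(\Xan) \to \HH^1(\Xan, \underline{\R}) \to \HH^1(\Xan, \Aff) \to \HH^1(\Xan, \AS^{1,0,\cl}) \to \HH^2(\Xan, \underline{\R})
\end{align*}
is exact. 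Substituting $\HH^1(\Xan,\underline\R) \cong \HH^{0,1}(\Xan)$, $\HH^1(\Xan,\AS^{1,0,\cl}) \cong \HH^{1,1}(\Xan)$, and $\HH^2(\Xan,\underline\R) = 0$ yields exactly the claimed four-term sequence.

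I do not anticipate a genuine obstacle here; the only points requiring a word of care are that the isomorphisms of Remark \ref{bem singular cohomology} are compatible with the maps induced by $d'$ (so that the long exact sequence really identifies with a sequence of tropical Dolbeault groups — this is how $d'$ was set up), the vanishing $\HH^2(\Xan, \underline{\R}) = 0$ from one-dimensionality of the homotopy type of $\Xan$, and the vanishing of the map out of $\HH^0(\Xan,\Aff)$, which follows from $\HH^0(\Xan,\Aff) \cong \HH^0(\Xan,\underline\R) \cong \R$ and exactness. So the proof is essentially a bookkeeping exercise on the long exact cohomology sequence; the substantive content is already in Proposition \ref{thm exponential sequence} and Remark \ref{bem singular cohomology}.
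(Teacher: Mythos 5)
Your proposal is correct and follows essentially the same route as the paper: take the long exact cohomology sequence of Proposition \ref{thm exponential sequence}, identify the terms via $\HH^q(\Xan,\AS^{p,0,\cl})\cong\HH^{p,q}(\Xan)$ and $\AS^{0,0,\cl}\cong\underline{\R}$, kill $\HH^2(\Xan,\underline{\R})$ by one-dimensionality of the homotopy type, and use $\HH^0(\Xan,\Aff)\cong\R$ to see that the map into $\HH^{1,0}(\Xan)$ vanishes. No gaps.
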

\begin{proof}
The long exact sequence in cohomology associated to (\ref{exponential sequence}) reads
\begin{align*}
0 \to &\HH^0(\Xan, \underline{\R}) \to \HH^0(\Xan, \Aff) \to \HH^0(\Xan, \AS^{1,0,\cl}) \\
\to &\HH^1(\Xan, \underline{\R}) \to \HH^1(\Xan, \Aff) \to \HH^1(\Xan, \AS^{1,0,\cl}) \to \HH^2(\Xan, \underline{\R}) \to \dots.
\end{align*}
Now using $\HH^0(\Xan, \Aff) \cong \R$ \cite[Theorem 3.2.61]{JellThesis}, 
$\HH^q(\Xan, \AS^{p,0,\cl}) \cong \HH^{p,q}(\Xan)$ \cite[Corollary 3.4.6]{JellThesis},
$\AS^{0,0, \cl} \cong \underline{\R}$ \cite[Lemma 3.4.5]{Jell}, and  
the fact that $\Xan$ is homotopy equivalent to a finite one-dimensional simplicial complex, 
which implies $\HH^2(\Xan, \underline{\R}) = 0$, we obtain
\begin{align*}
0 \to \R \to \R \to \HH^{1,0}(\Xan) \to \HH^{0,1}(\Xan) \to \HH^1(\Xan, \Aff) \to \HH^{1,1}(\Xan) \to 0.
\end{align*}
Now by exactness the map $\R \to \HH^{1,0}(\Xan)$ is zero and we obtain the result. 
\end{proof}

%------------------------------------------------------------------------------------------------------------------------------------------
\section{Poincar\'e duality and finite dimensionality}

\subsection{Poincar\'e duality and finite dimensionality for smooth projective curves}

In this section, $X$ is a smooth projective curve over $K$. 

We write $h^{p,q}(\Xan) := \dim_\R \HH^{p,q}(\Xan)$ and call these numbers \emph{tropical Hodge numbers}. 

\begin{lem} \label{lem non degenerate left}
The Poincar\'e duality pairings 
\begin{align*}
&\HH^{0,0}(\Xan) \times \HH^{1,1}(\Xan) \to \R \text{ and} \\
&\HH^{1,0}(\Xan) \times \HH^{0,1}(\Xan) \to \R 
\end{align*}
are non-degenerate on the left, 
i.e.~for all $\alpha \in \HH^{p,0}(\Xan) \setminus \{0\}$ there exists $\beta \in \HH^{1-p,1}(\Xan)$ 
such that $\langle \alpha, \beta \rangle \neq 0$. 

In particular, Poincar\'e duality holds for $X$ if and only if $h^{1,1}(\Xan) = 1$ and $h^{1,0}(\Xan) = h^{0,1}(\Xan)$. 
\end{lem}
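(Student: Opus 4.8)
The plan is to prove non-degeneracy on the left for both pairings in \eqref{PD pairings} and then deduce the equivalence with Poincaré duality. For the first pairing, $\HH^{0,0}(\Xan) \cong \R$ is generated by the constant function $1$, and the pairing sends $(1, \beta)$ to $\int_{\Xan} \beta$ for a closed $(1,1)$-form $\beta$ representing a class in $\HH^{1,1}(\Xan)$. So I must show: if $\int_{\Xan} \beta \neq 0$ for some closed $(1,1)$-form, then the constant $1$ pairs nontrivially; equivalently, that there exists a closed $(1,1)$-form of nonzero total integral whose class in $\HH^{1,1}(\Xan)$ is nonzero. This amounts to showing the integration map $\HH^{1,1}(\Xan) \to \R$ is nonzero, i.e.\ that not every closed $(1,1)$-form with nonzero integral is $d''$-exact; by Stokes' theorem (\cite[Theorem 5.17]{Gubler}, as cited before the pairings), $d''$-exact forms with compact support integrate to zero, and $\Xan$ is compact since $X$ is projective, so the integral is a well-defined nonzero functional on $\HH^{1,1}(\Xan)$ as soon as some closed $(1,1)$-form has nonzero integral — and such a form exists, e.g.\ one supported near a point and locally of the form coming from a tropical chart (this is standard from the Chambert-Loir--Ducros calculus, or can be quoted from the identification of $\HH^{1,1}$-type constructions used below). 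Hence the constant $1$ is not in the left kernel.

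For the second pairing, the argument is essentially Poincaré duality in the already-established identification $\HH^{0,1}(\Xan) \cong \HH^1_{\sing}(\Xan,\R)$ from Remark \ref{bem singular cohomology}, together with the corresponding description of $\HH^{1,0}(\Xan)$. The cleanest route is: take $0 \neq \alpha \in \HH^{1,0}(\Xan)$; I want $\beta \in \HH^{0,1}(\Xan)$ with $\int_{\Xan} \alpha \wedge \beta \neq 0$. Here I would use the injection $\HH^{1,0}(\Xan) \hookrightarrow \HH^{0,1}(\Xan)$ constructed by the author in \cite[Proposition 3.4.11]{JellThesis} (mentioned in the introduction) — or, more self-containedly, observe that $\alpha$ being closed and $X$ being a curve means $\alpha$ corresponds to a harmonic/affine-type $1$-cocycle, and the wedge pairing with $\HH^{0,1} \cong \HH^1_{\sing}(\Xan,\R)$ is the classical cup-product pairing on the one-dimensional simplicial complex to which $\Xan$ is homotopy equivalent. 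Since that cup-product pairing $\HH^1 \times \HH^1 \to \HH^2 = 0$ is trivial for a graph, the pairing here is not simply the topological one; rather one uses that a nonzero closed $(1,0)$-form has nonzero "periods" along a suitable cycle, and pairs it against the singular class dual to that cycle. I would phrase this via: the period map $\HH^{1,0}(\Xan) \to \Hom(\HH_1(\Xan,\Z),\R)$ is injective (a closed $(1,0)$-form with all periods zero is $d'$-exact, hence $d''$-closed and exact, hence zero in $\HH^{1,0}$), and every element of $\Hom(\HH_1,\R) \cong \HH^1_{\sing}(\Xan,\R) \cong \HH^{0,1}(\Xan)$ arises as "$\int \alpha \wedge (-)$" against a closed $(0,1)$-form — this last matching is exactly the statement that the wedge-and-integrate pairing computes the period pairing.

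The main obstacle I anticipate is precisely this last identification for the second pairing: checking that the Chambert-Loir--Ducros wedge product followed by integration, restricted to $\HH^{1,0} \times \HH^{0,1}$, reproduces the topological period pairing on the skeleton, so that left non-degeneracy reduces to injectivity of the period map. This requires unwinding the definitions of the forms and the integration map on a curve and comparing with the simplicial picture; I expect the author handles it by citing the identifications in \cite{JellThesis} (Remark \ref{bem singular cohomology} and the maps $\HH^{p,0} \to \HH^{0,p}$) rather than redoing the local computation. Once both left non-degeneracies are in hand, the final equivalence is pure linear algebra: a bilinear pairing $V \times W \to \R$ that is non-degenerate on the left is perfect if and only if $\dim V \leq \dim W$ and (by symmetry of the setup, applying the same to the transposed pairing, or directly) $\dim V = \dim W$; applying this to $(\HH^{0,0},\HH^{1,1})$ gives $h^{1,1} \geq h^{0,0} = 1$ with equality iff that pairing is perfect, i.e.\ $h^{1,1}=1$, and applying it to $(\HH^{1,0},\HH^{0,1})$ gives that this pairing is perfect iff $h^{1,0}=h^{0,1}$. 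Combining, Poincaré duality holds iff $h^{1,1}(\Xan)=1$ and $h^{1,0}(\Xan)=h^{0,1}(\Xan)$.
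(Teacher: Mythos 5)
Your proposal is correct and follows essentially the same route as the paper: non-triviality of the integration map on $\AS^{1,1}(\Xan)$ for the first pairing, the operator $J\colon \HH^{1,0}(\Xan)\to\HH^{0,1}(\Xan)$ of \cite[Proposition 3.4.11]{JellThesis} for the second, and finite-dimensional linear algebra for the final equivalence. The ``main obstacle'' you flag is resolved exactly as you predict: that proposition supplies not just injectivity of $J$ but the property $\int \alpha\wedge J\alpha\neq 0$ for $\alpha\neq 0$, which is precisely what left non-degeneracy of the second pairing requires.
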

\begin{proof}
We have that $\HH^{0,0}(\Xan)$ is the space of constant functions. 
Since the integration map $\AS^{1,1}(\Xan) \to \R$ is non-trivial, $\R$-linear and descends to cohomology, 
the first pairing is non-degenerate on the left.

By \cite[Proposition 3.4.11]{JellThesis} we have an injective map 
$J \colon \HH^{1,0}(\Xan) \to \HH^{0,1}(\Xan)$ with the property that
$\int \alpha \vedge J\alpha \neq 0$ if $\alpha \neq 0$.
This shows that the second pairing is non-degenerate on the left. 

The last statement is a direct consequence of the fact that $h^{1,0}(\Xan)$ is finite and $h^{0,0}(\Xan) = 1$. 
\end{proof}

\begin{satz} \label{PD}
For a smooth projective curve $X$, Poincar\'e duality holds if and only if $S_X = 0$. 
\end{satz}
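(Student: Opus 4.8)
The plan is to combine the two long exact sequences established in Corollary~\ref{kor exponential sequence} and Corollary~\ref{kor harmonic sequence} with the reduction criterion for Poincar\'e duality from Lemma~\ref{lem non degenerate left}. By that lemma, Poincar\'e duality holds for $X$ if and only if $h^{1,1}(\Xan) = 1$ and $h^{1,0}(\Xan) = h^{0,1}(\Xan)$. So the whole task reduces to extracting these two numerical facts from the exact sequences, and showing they are both equivalent to $S_X = 0$.

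First I would feed Corollary~\ref{kor exponential sequence} into the game: the sequence
\begin{align*}
0 \to \HH^{1,0}(\Xan) \to \HH^{0,1}(\Xan) \to \HH^1(\Xan, \Aff) \to \HH^{1,1}(\Xan) \to 0
\end{align*}
gives, by alternating-sum of dimensions (all terms are finite dimensional, since $\HH^{1,0}$ and $\HH^{0,1}$ are finite dimensional by the results recalled in the introduction, and $\HH^1(\Xan,\Aff)$ is finite dimensional by Corollary~\ref{kor harmonic sequence}), the relation
\begin{align*}
h^{1,1}(\Xan) = \dim \HH^1(\Xan, \Aff) - h^{0,1}(\Xan) + h^{1,0}(\Xan).
\end{align*}
Now substitute $\dim \HH^1(\Xan, \Aff) = \dim S_X + 1$ from Corollary~\ref{kor harmonic sequence}. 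This yields
\begin{align*}
h^{1,1}(\Xan) - 1 = \dim S_X + \bigl(h^{1,0}(\Xan) - h^{0,1}(\Xan)\bigr).
\end{align*}
From Lemma~\ref{lem non degenerate left}, the injection $J \colon \HH^{1,0}(\Xan) \to \HH^{0,1}(\Xan)$ forces $h^{1,0}(\Xan) \le h^{0,1}(\Xan)$, so the parenthesized term is $\le 0$; and the first pairing being non-degenerate on the left forces $h^{1,1}(\Xan) \ge h^{0,0}(\Xan) = 1$, so the left side is $\ge 0$. Hence both sides are sandwiched, and the equation reads: a non-negative quantity equals $\dim S_X$ minus a non-negative quantity. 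If $S_X = 0$, then $h^{1,1}(\Xan) - 1 \le 0$ combined with $h^{1,1}(\Xan) - 1 \ge 0$ gives $h^{1,1}(\Xan) = 1$, and then $h^{1,0}(\Xan) - h^{0,1}(\Xan) = 0$ as well, so both conditions of Lemma~\ref{lem non degenerate left} hold and Poincar\'e duality follows. Conversely, if Poincar\'e duality holds, then $h^{1,1}(\Xan) = 1$ and $h^{1,0}(\Xan) = h^{0,1}(\Xan)$, so the displayed equation gives $\dim S_X = 0$, i.e.\ $S_X = 0$.

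The one genuinely load-bearing input — and the step I expect to need the most care — is the sign/inequality bookkeeping: one must be sure that the two "non-degenerate on the left" statements of Lemma~\ref{lem non degenerate left} really do translate into $h^{1,1}(\Xan) \ge 1$ and $h^{1,0}(\Xan) \le h^{0,1}(\Xan)$ in a way that lets the single linear equation pin down both unknowns simultaneously. The subtlety is that a priori the equation $A = \dim S_X - B$ with $A, B \ge 0$ does not force $A = B = 0$ from $\dim S_X = 0$ alone — it does here only because $A$ and $B$ are not independent: $A = \dim S_X - B \le \dim S_X$, so $\dim S_X = 0 \Rightarrow A = 0 \Rightarrow B = 0$. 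Making this logic airtight, and making sure all the intervening cohomology groups are finite dimensional so that the Euler characteristic argument is legitimate, is the crux; everything else is direct substitution from the corollaries already proved.
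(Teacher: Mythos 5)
Your argument is essentially the paper's proof: it uses exactly the same three inputs (Corollary~\ref{kor exponential sequence}, Corollary~\ref{kor harmonic sequence}, and the reduction in Lemma~\ref{lem non degenerate left}), merely repackaged as an alternating-sum-of-dimensions computation instead of tracking which maps in the four-term sequence become isomorphisms. The one thing you must fix is the parenthetical claim that ``$\HH^1(\Xan,\Aff)$ is finite dimensional by Corollary~\ref{kor harmonic sequence}'': that corollary only says $\dim \HH^1(\Xan,\Aff) = \dim S_X + 1$, and $S_X$ can genuinely be infinite dimensional --- indeed Theorem~\ref{non finite dimensional} is precisely about this situation --- so a blanket Euler-characteristic identity among finite numbers is not available a priori. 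The argument survives because in each direction the needed finiteness is supplied by the hypothesis: if $S_X = 0$ then $\dim\HH^1(\Xan,\Aff)=1$ and your computation is legitimate; if Poincar\'e duality holds, then the exact sequence gives $\dim\HH^1(\Xan,\Aff) = \dim\coker\bigl(\HH^{1,0}(\Xan) \to \HH^{0,1}(\Xan)\bigr) + h^{1,1}(\Xan) = 0 + 1$, which is finite, and then $\dim S_X = 0$ follows. With that repair your inequality bookkeeping ($A + B = \dim S_X$ with $A, B \ge 0$) is correct and matches the paper's conclusion.
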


\begin{proof}
If Poincar\'e duality holds, the injective map $\HH^{1,0}(\Xan) \to \HH^{0,1}(\Xan)$ from Corollary \ref{kor exponential sequence} is an isomorphism.
Thus $\HH^1(\Xan, \Aff) \to \HH^{1,1}(\Xan)$ is also an isomorphism. 
Since the latter space has dimension $1$ again by Poincar\'e duality, so does $\HH^1(\Xan, \Aff)$. 
By Corollary \ref{kor harmonic sequence} we have $\dim S_X = \dim \HH^1(\Xan, \Aff) - 1$ and hence $S_X = 0$. 
 
Conversely if $S_X = 0$, then again by Corollary \ref{kor harmonic sequence} we have $\HH^1(\Xan, \Aff) \cong \R$. 
Since $h^{1,1}(\Xan) \geq 1$, the surjective map $\HH^1(\Xan, \Aff) \to \HH^{1,1}(\Xan)$ 
from Corollary \ref{kor exponential sequence} is an isomorphism. 
Thus the injective map $\HH^{1,0}(\Xan) \to \HH^{0,1}(\Xan)$ from Corollary \ref{kor exponential sequence} is also an isomorphism, 
which shows that we have Poincar\'e duality by Lemma \ref{lem non degenerate left}. 
\end{proof}

\begin{kor} \label{corollary finite field}
Let $K$ be an algebraically closed non-archimedean field whose residue field is the algebraic closure of a finite field. 
Then Poincar\'e duality holds for smooth projective curves $X$ over $K$. 
\end{kor}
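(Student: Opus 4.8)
The plan is to deduce Corollary \ref{corollary finite field} directly from Theorem \ref{PD} by showing that $S_X = 0$ whenever the residue field $\Ktilde$ is the algebraic closure of a finite field. Recall that $S_X = \bigoplus_{x \in G(X)} \Pic^0(C_x) \otimes_\Z \R$, where $C_x$ ranges over the smooth projective $\Ktilde$-curves attached to the type II points in $G(X)$. Hence it suffices to prove that $\Pic^0(C) \otimes_\Z \R = 0$ for every smooth projective curve $C$ over $\Ktilde = \overline{\mathbb{F}_p}$, which amounts to showing that $\Pic^0(C)$ is a torsion abelian group.

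The key step is the classical fact that the group of rational points of an abelian variety over $\overline{\mathbb{F}_p}$ is torsion. Concretely, let $A = \Pic^0(C)$ be the Jacobian of $C$, an abelian variety over $\Ktilde$. Any point $a \in A(\Ktilde)$ is defined over a finite subextension $\mathbb{F}_q \subset \overline{\mathbb{F}_p}$, i.e.\ $a \in A(\mathbb{F}_q)$; but $A(\mathbb{F}_q)$ is a finite group since $A$ is of finite type and $\mathbb{F}_q$ is finite, so $a$ has finite order. Therefore $\Pic^0(C)$ is torsion, and tensoring with $\R$ kills it: $\Pic^0(C) \otimes_\Z \R = 0$. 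Summing over the finitely many $x \in G(X)$ gives $S_X = 0$, and Theorem \ref{PD} then yields Poincaré duality for $X$.

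I expect no real obstacle here; the only point requiring a word of care is that $C_x$ is indeed a smooth projective curve over $\Ktilde$ (so that $\Pic^0(C_x)$ is genuinely an abelian variety and the finiteness argument applies), but this is built into the setup of Section 2 where $C_x$ is defined, and the group $G(X)$ is finite by \cite[Remark 4.18]{BPR2} as already noted after Definition \ref{defn G(X)}. So the corollary is essentially a one-line consequence of Theorem \ref{PD} together with the standard structure theory of abelian varieties over $\overline{\mathbb{F}_p}$.

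\begin{proof}
By Theorem \ref{PD}, it suffices to show $S_X = 0$. Recall $S_X = \bigoplus_{x \in G(X)} \Pic^0(C_x) \otimes_\Z \R$, a finite sum since $G(X)$ is finite. For each $x \in G(X)$, the curve $C_x$ is a smooth projective curve over $\Ktilde = \overline{\mathbb{F}_p}$, so $\Pic^0(C_x)$ is an abelian variety over $\Ktilde$. Every point of $\Pic^0(C_x)(\Ktilde)$ is already defined over some finite subfield $\mathbb{F}_q$ of $\overline{\mathbb{F}_p}$, and $\Pic^0(C_x)(\mathbb{F}_q)$ is a finite group; hence every element of $\Pic^0(C_x)$ has finite order. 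Thus $\Pic^0(C_x)$ is torsion and $\Pic^0(C_x) \otimes_\Z \R = 0$. Summing over $x \in G(X)$ gives $S_X = 0$, and Poincaré duality follows from Theorem \ref{PD}.
\end{proof}
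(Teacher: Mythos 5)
Your proof is correct and is essentially identical to the paper's: both reduce to showing $S_X = 0$ via Theorem \ref{PD} by observing that any point of the Jacobian $\Pic^0(C_x)$ over $\Ktilde$ is defined over a finite subfield and hence torsion, so that $\Pic^0(C_x)\otimes_\Z\R = 0$.
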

\begin{proof}
We have to show that for any point $x \in \Xan$ of type II with associated $\Ktilde$-curve $C_x$ 
the group $\Pic^0(C_x)$ is torsion. 
Now $\Pic^0(C_x)$ is the set of $\Ktilde$-points of an abelian variety $J_x$, namely the Jacobian of $C_x$. 
Any element of $J_x(\Ktilde)$ is defined over some finite subfield of $\Ktilde$, thus is torsion. 
\end{proof}

Recall the subset $G(X)$ of $\Xan$, defined in \ref{defn G(X)}, which is the finite set of type II points of positive genus. 

\begin{satz} \label{non finite dimensional}
Let $X$ be a smooth projective curve over $K$ and let the residue field of $K$ be $\C$. 
Then $\HH^{1,1}(X)$ is finite dimensional if and only if $G(X) = \emptyset$. 
\end{satz}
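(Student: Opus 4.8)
The strategy is to run the two long exact sequences we have already built and track where finite dimensionality can fail. By Corollary~\ref{kor exponential sequence} the space $\HH^{1,1}(\Xan)$ is a quotient of $\HH^1(\Xan, \Aff)$, and $\HH^{1,0}(\Xan), \HH^{0,1}(\Xan)$ are always finite dimensional (the former by \cite[Proposition 3.4.11]{JellThesis} together with Remark~\ref{bem singular cohomology}, the latter by Remark~\ref{bem singular cohomology}); hence $\HH^{1,1}(\Xan)$ is finite dimensional if and only if $\HH^1(\Xan, \Aff)$ is. So the whole question reduces to the finite dimensionality of $\HH^1(\Xan, \Aff)$.

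To analyze $\HH^1(\Xan, \Aff)$ I would use the exact sequence of sheaves $0 \to \Aff \to \HS_X \to \Scal_X \to 0$ from Proposition~\ref{thm harmonic sequence}. Taking the long exact cohomology sequence and using $\HH^0(\Xan, \HS_X) \cong \R \cong \HH^0(\Xan, \Aff)$ and $\HH^1(\Xan, \HS_X) \cong \R$ (Propositions~\ref{prop harmonic cohomology}), together with $\HH^1(\Xan, \Scal_X) = 0$ since $\Scal_X$ is a finite sum of skyscraper sheaves, one gets
\begin{align*}
0 \to \HH^0(\Xan, \Scal_X) \to \HH^1(\Xan, \Aff) \to \R \to 0.
\end{align*}
Thus $\HH^1(\Xan, \Aff)$ is finite dimensional if and only if $\HH^0(\Xan, \Scal_X) = S_X = \bigoplus_{x \in G(X)} \Pic^0(C_x) \otimes_\Z \R$ is finite dimensional. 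Here the residue field hypothesis enters decisively: when $\Ktilde = \C$, for any positive-genus smooth projective $\C$-curve $C_x$ the group $\Pic^0(C_x)$ is the group of $\C$-points of a positive-dimensional abelian variety, hence a divisible group with infinite $\Q$-rank, so $\Pic^0(C_x) \otimes_\Z \R$ is an infinite dimensional $\R$-vector space. Consequently $S_X$ is finite dimensional (in fact zero) precisely when $G(X) = \emptyset$, and this gives the claim. If $G(X) = \emptyset$ then $S_X = 0$, $\HH^1(\Xan, \Aff) \cong \R$, and $\HH^{1,1}(\Xan)$ is finite dimensional (indeed this is the Poincar\'e duality case of Theorem~\ref{PD}); if $G(X) \neq \emptyset$ then $S_X$ is infinite dimensional, hence so is $\HH^1(\Xan, \Aff)$, and therefore so is $\HH^{1,1}(\Xan)$ since it surjects onto the cokernel of the finite dimensional space $\HH^{0,1}(\Xan)$ inside $\HH^1(\Xan,\Aff)$.

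\textbf{Main obstacle.} The sheaf-theoretic bookkeeping is routine given the earlier results; the one genuinely substantive point is the algebraic fact that for a positive-genus curve $C$ over an algebraically closed field of characteristic zero the abelian group $\Pic^0(C)$ has infinite rank, equivalently that $\Pic^0(C) \otimes_\Z \R \neq 0$ is infinite dimensional. This is where the difference from Corollary~\ref{corollary finite field} shows up: over $\overline{\mathbb F}_p$ every point of the Jacobian is torsion, so tensoring with $\R$ kills everything, whereas over $\C$ the Jacobian $J(\C) \cong \C^g / \Lambda$ contains a $\Q$-vector space of uncountable dimension. I would record this as a short lemma (e.g.~$J(\C)$ contains $(\R/\Z)^{2g}$ as a subgroup, whose $\R$-tensor is infinite dimensional, or more simply that a divisible group of positive rank stays infinite dimensional after $\otimes_\Z \R$), and then the theorem follows immediately by combining it with the two exact sequences above.
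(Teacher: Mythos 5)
Your proof is correct and follows essentially the same route as the paper: reduce via Corollary~\ref{kor exponential sequence} and the finite dimensionality of $\HH^{0,1}(\Xan)$ to the finite dimensionality of $\HH^1(\Xan,\Aff)$, then via Corollary~\ref{kor harmonic sequence} to that of $S_X$, and finally use that for $\Ktilde=\C$ and $x\in G(X)$ the group $\Pic^0(C_x)\cong\C^g/\Gamma$ has infinite rank as an abelian group, so $\Pic^0(C_x)\otimes_\Z\R$ is infinite dimensional. One small caveat: your parenthetical alternative lemma that a divisible group of positive rank stays infinite dimensional after $\otimes_\Z\R$ is false as stated (e.g.\ $\Q\otimes_\Z\R=\R$); the correct and sufficient point, which your main argument already uses, is that $\C^g/\Gamma$ has infinite (indeed uncountable) $\Q$-rank.
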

\begin{proof}
If $G(X) = \emptyset$, then $S_X = 0$ and thus $X$ satisfies Poincar\'e duality by Theorem \ref{ThmA} 
(or equivalently by \cite[Theorem 2]{JellWanner}). 
Since $\HH^{0,0}(\Xan) \cong \R$, we then have $\HH^{1,1}(\Xan) \cong \R$, thus $\HH^{1,1}(\Xan)$ is in particular 
finite dimensional. 

We now show that $G(X) \neq \emptyset$ implies that $\HH^{1,1}(\Xan)$ is not finite dimensional. 
The vector space $\HH^{0,1}(\Xan)$ is finite dimensional by \cite[Theorem 4.9]{Jell}. 
By Corollary \ref{kor exponential sequence} we thus have to show that $\HH^1(\Xan, \Aff)$ is not finite dimensional if $G(X) \neq \emptyset$. 
By Corollary \ref{kor harmonic sequence} it is thus enough to show that $S_X$ is not finite dimensional when $G(X) \neq \emptyset$. 
If $x \in G(X)$, then $C_x$ is a smooth projective curve of positive genus $g$ over $\C$. 
Thus $\Pic^0(C_x) \cong \C^g / \Gamma$ for some lattice $\Gamma$. 
In particular $\Pic^0(C_x)$ is not of finite rank as an abelian group, thus $\dim_\R (\Pic^0(C_x) \otimes_\Z \R)$ is not finite 
(since the tensor product is taken over $\Z$). 
Therefore $\dim_\R S_X$ is not finite.
\end{proof}

\begin{bem}
We want to remark that curves with $G(X) \neq \emptyset$ actually exist. 
One way to see this is that any curve over $K$ with $G(X) = \emptyset$ is either $\mathbb{P}^1_K$ or a Mumford curve 
\cite[Proposition 2.26 \& Theorem 2.28]{JellWanner}. 
More concretely, by Remark \ref{remark semistable}, any curve of positive genus with good reduction has non-empty $G(X)$. 
\end{bem}

\subsection{From global Poincar\'e duality to local Poincar\'e duality} \label{sect. global to local}

In this section $X$ is a smooth curve over a non-archimedean field $K$.

We now change our perspective on Poincar\'e duality by considering forms with compact support. 
For any open subset $U \subset \Xan$ we define $\AS^{p,q}_c(U)$ to be the vector subspace
of $\AS^{p,q}(U)$ of forms with compact support.
We define 
\begin{align*}
\HHc^{p,q}(U) := \frac {\ker (d'' \colon \AS^{p,q}_c(U) \to \AS^{p,q+1}_c(U))}{\im (d'' \colon \AS^{p,q-1}_c(U) \to \AS^{p,q}_c(U))}.
\end{align*} 
Integration and the wedge product induce a pairing
\begin{align*}
\AS^{p,q}(U) \times \AS^{1-p,1-q}_c(U) \to \R.
\end{align*}
This descends to a pairing on cohomology by a suitable version of Stokes' theorem \cite[Theorem 5.17]{Gubler}.
We will from now on consider the induced map
\begin{align*}
\PD_U \colon \HH^{p,q}(U) \to \HHc^{1-p,1-q}(U)^* := \Hom_\R(\HH^{1-p,1-q}_c(U), \R).
\end{align*}
We say that $U$ \emph{satisfies Poincar\'e duality} ($\PD$) if $\PD_U$ is an isomorphism for all $p,q$.  
If $\Xan$ is compact, this agrees with Definition \ref{defn PD} since $\HH^{p,q}(\Xan)  = \HHc^{p,q}(\Xan)$ in that case.
Note that $U$ satisfies $\PD$ if it is disjoint from $G(X)$ by \cite[Theorem 2]{JellWanner}. We will crucially use this fact.

\begin{lem} \label{lem 3 out of 4}
Let $U$ be an open subset of $\Xan$.
\begin{enumerate}
\item
Let $U = U_1 \cup U_2$ and
write $U_{12} = U_1 \cap U_2$. 
Then if three of the four sets $U, U_1, U_2, U_{12}$ satisfy $\PD$, so does the fourth. 
\item
Let $U = \bigcup_{i=1}^{k} U_k$ be a \emph{disjoint} union of open subsets and 
suppose that $U$ satisfies $\PD$. 
Then for every subset $J \subset \{1,\dots,k\}$ the subset $\bigcup_{j \in J} U_j$ satisfies $\PD$.
\end{enumerate}
\end{lem}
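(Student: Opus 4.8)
The plan is to prove both parts using Mayer--Vietoris sequences for tropical Dolbeault cohomology, together with the observation that $\PD$ is equivalent to an isomorphism statement at the level of cohomology and that the relevant cohomology groups are finite dimensional. First I would recall the two Mayer--Vietoris sequences available for an open cover $U = U_1 \cup U_2$: one for ordinary cohomology $\HH^{p,q}$ and one for compactly supported cohomology $\HHc^{p,q}$, which run in opposite directions. Concretely, the sequence for $\HHc$ reads
\begin{align*}
\cdots \to \HHc^{p,q}(U_{12}) \to \HHc^{p,q}(U_1) \oplus \HHc^{p,q}(U_2) \to \HHc^{p,q}(U) \to \HHc^{p,q+1}(U_{12}) \to \cdots,
\end{align*}
and dually for $\HH^{p,q}$. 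The pairing $\PD_U$ together with the analogous $\PD_{U_i}$ and $\PD_{U_{12}}$ assembles these two long exact sequences into a ladder diagram whose vertical arrows are the Poincaré duality maps; one checks (this is the routine compatibility of cup product with the connecting maps of Mayer--Vietoris, up to sign) that this ladder commutes. Then the five lemma applies: if three of the four columns are isomorphisms in every degree, so is the fourth. This is exactly part (1).

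For part (2) I would argue by induction. If $U = \bigsqcup_{i=1}^k U_i$ is a disjoint union, then for any subset $J \subset \{1,\dots,k\}$ we have $U = \bigl(\bigsqcup_{j \in J} U_j\bigr) \sqcup \bigl(\bigsqcup_{j \notin J} U_j\bigr)$, so it suffices to treat the case where $U = V_1 \sqcup V_2$ is a disjoint union of two opens and deduce that each $V_i$ satisfies $\PD$. In the disjoint case the restriction $\AS^{p,q}(U) = \AS^{p,q}(V_1) \oplus \AS^{p,q}(V_2)$, and similarly for forms with compact support, so $\HH^{p,q}(U) = \HH^{p,q}(V_1) \oplus \HH^{p,q}(V_2)$, $\HHc^{p,q}(U) = \HHc^{p,q}(V_1) \oplus \HHc^{p,q}(V_2)$, and $\PD_U = \PD_{V_1} \oplus \PD_{V_2}$ under these identifications. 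A direct sum of linear maps is an isomorphism if and only if each summand is, so $\PD_U$ an isomorphism forces each $\PD_{V_i}$ to be an isomorphism; inducting on $k$ (or simply iterating) gives the claim for arbitrary $J$. Alternatively part (2) follows formally from part (1) applied with $U_1 = \bigsqcup_{j \in J} U_j$, $U_2 = \bigsqcup_{j \notin J} U_j$, $U_{12} = \emptyset$, using that the empty set trivially satisfies $\PD$.

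The main obstacle I anticipate is not the five-lemma bookkeeping but establishing the Mayer--Vietoris sequence for compactly supported tropical Dolbeault cohomology and the commutativity of the duality ladder: one needs the short exact sequence of complexes $0 \to \AS^{\bullet,\bullet}_c(U_{12}) \to \AS^{\bullet,\bullet}_c(U_1) \oplus \AS^{\bullet,\bullet}_c(U_2) \to \AS^{\bullet,\bullet}_c(U) \to 0$ (extension by zero of compactly supported forms), which requires a partition-of-unity argument adapted to the Chambert-Loir--Ducros forms, and then a careful sign chase showing that $\PD$ intertwines the connecting homomorphisms of the two sequences — this is where Stokes' theorem in the form \cite[Theorem 5.17]{Gubler} enters, since the connecting maps are computed by multiplying by a cutoff function and differentiating. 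Once these standard but slightly technical facts are in place, both statements are immediate.
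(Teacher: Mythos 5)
Your proposal is correct and follows essentially the same route as the paper: part (1) via the Mayer--Vietoris sequences for $\HH^{p,q}$ and $\HHc^{p,q}$ (the latter resting on partitions of unity for the Chambert-Loir--Ducros forms), dualizing the compactly supported sequence so that the $\PD$ maps give a commutative ladder, and applying the five lemma; part (2) via $\PD_U = \bigoplus_i \PD_{U_i}$ for a disjoint union. The only cosmetic remark is that finite dimensionality is not actually needed anywhere, since $\Hom_\R(-,\R)$ is exact on arbitrary real vector spaces.
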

\begin{proof}
To prove $i)$ the standard arguments from differential geometry work in our setting. 
See for instance the book by Bott and Tu on differential forms \cite[Lemma 5.6]{BottTu}.
Since the sheaves $\AS^{p,q}$ admit partitions of unity \cite[Proposition 3.3.6]{CLD} both $\HH^{p,q}$ and $\HHc^{p,q}$ satisfy 
Mayer-Vietoris-sequences \cite[Appendix A.1]{JellThesis}. 
Dualizing the Mayer-Vietoris-sequence for $\HHc^{p,q}$, we see that $\PD$ induces a morphism of complexes 
between the Mayer-Vietoris-sequence for $\HH^{p,q}$ and the one for ${\HHc^{1-p,1-q}}^*$.
The claim now follows from the five lemma.

For $ii)$, notice that we have $\PD_U = \bigoplus_{i = 1} ^k \PD_{U_i}$. 
Thus if $\PD_U$ is an isomorphism, so is $\PD_{\bigcup_{j \in J}  U_j } = \bigoplus_{j \in J} \PD_{U_j}$.  
\end{proof}

\begin{satz} \label{PD local}
Let $X$ be a smooth curve and $U \subset \Xan$ an open subset. 
Assume further that $S_X = 0$. 
Then $U$ satisfies $\PD$.
\end{satz}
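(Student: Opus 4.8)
The idea is to reduce the statement for an arbitrary open $U$ to the global result for $\Xbar$ (Theorem \ref{PD}, via $S_X = S_{\Xbar} = 0$) together with the already known local result \cite[Theorem 2]{JellWanner} at points away from $G(X)$, using the Mayer--Vietoris machinery of Lemma \ref{lem 3 out of 4}. First I would pass to the smooth compactification $\Xbar$ and note that $\Xan$ is an open subset of ${\Xbar}^{\an}$ with $G(\Xbar) = G(X) = \emptyset$ (since $S_X = 0$ forces $\Pic^0(C_x)$ torsion for every type II point, and conversely); so it suffices to prove the statement for all open $U \subset {\Xbar}^{\an}$ when $\Xbar$ is projective with $S_{\Xbar} = 0$. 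By Theorem \ref{PD}, $\Xbar$ itself (i.e.~$U = {\Xbar}^{\an}$) satisfies $\PD$.

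Next I would set up the following dichotomy: an open $U \subset {\Xbar}^{\an}$ either (a) is disjoint from the finite set $G := G(\Xbar) = \emptyset$ --- but that set is empty, so this case is automatic --- hmm, wait: the point is that $G(\Xbar)$ being empty means \emph{every} open $U$ is disjoint from $G(\Xbar)$, so \cite[Theorem 2]{JellWanner} already applies directly to $U$! Let me reconsider the structure: since $S_X = 0$, for every type II point $x$ the curve $C_x$ has $\Pic^0(C_x)$ torsion, in particular $\Pic^0(C_x) \otimes_\Z \R = 0$, so by Definition \ref{defn G(X)} the genus of $C_x$ could still be positive. So $G(X)$ need not be empty --- $S_X = 0$ is weaker than $G(X) = \emptyset$. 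This is the crux.

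So the genuine argument must be: $G := G(X)$ is a finite set of type II points. For $U$ disjoint from $G$, \cite[Theorem 2]{JellWanner} gives $\PD$ directly. For general $U$, I would argue by induction on $\#(U \cap G)$. When $U \cap G = \{x_1, \dots, x_m\}$ with $m \geq 1$: choose a small ``strictly simple'' open neighborhood $V$ of $x_1$ (a basic open set whose closure meets $G$ only in $x_1$), set $U_1 = V$, $U_2 = U \setminus \{x_1\}$, so $U = U_1 \cup U_2$, $U_{12} = V \setminus \{x_1\}$. Now $U_2$ and $U_{12}$ each meet $G$ in fewer than $m$ points, hence satisfy $\PD$ by induction; and I claim $V$ itself satisfies $\PD$. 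To get $\PD$ for $V$ I would apply Theorem \ref{PD}/the global result to an auxiliary projective curve: realize $C_{x_1}$ as the special-fiber component of a semistable model whose \emph{other} components are all rational --- i.e.~replace the rest of $\Xbar$ by trees of $\mathbb{P}^1$'s --- producing a projective curve $X'$ with $S_{X'} = \Pic^0(C_{x_1}) \otimes_\Z \R \subset S_X = 0$, hence $S_{X'} = 0$, hence $X'$ satisfies $\PD$ globally; then excise from ${X'}^{\an}$ down to a neighborhood of $x_1$ isomorphic to $V$ using Lemma \ref{lem 3 out of 4} (peeling off the rational tails, which are $G$-disjoint and hence satisfy $\PD$). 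Finally, with three of $U, U_1, U_2, U_{12}$ satisfying $\PD$, Lemma \ref{lem 3 out of 4}(i) gives $\PD$ for $U$.

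\textbf{Main obstacle.} The delicate point is the local claim that a strictly simple neighborhood $V$ of a single type II point $x$ with $\Pic^0(C_x) \otimes_\Z \R = 0$ satisfies $\PD$; this cannot come from \cite[Theorem 2]{JellWanner} directly since $V$ meets $G(X)$. The plan to handle it --- building an auxiliary projective curve with that one component and only rational components otherwise, applying the global Theorem \ref{PD}, and then cutting back down to $V$ via repeated Mayer--Vietoris with $G$-disjoint (hence $\PD$) pieces --- requires knowing such an auxiliary curve and such a cutting procedure exist, i.e.~a semistable-model/modification argument over $K^\circ$ and a matching of the Berkovich-analytic neighborhoods. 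I expect this geometric input (constructing $X'$ and identifying a neighborhood of $x$ in ${X'}^{\an}$ with $V$) to be where the real work lies; everything after it is a formal five-lemma/induction bookkeeping via Lemma \ref{lem 3 out of 4}.
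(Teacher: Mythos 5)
You correctly identify the crux (that $S_X=0$ does not force $G(X)=\emptyset$, so \cite[Theorem 2]{JellWanner} does not apply directly to open sets meeting $G(X)$), and your overall Mayer--Vietoris strategy via Lemma \ref{lem 3 out of 4} is the right framework. However, there is a genuine gap at exactly the step you flag as the ``main obstacle'': the claim that a small neighborhood $V$ of a single point $x_1 \in G(X)$ satisfies $\PD$. Your proposed route --- constructing an auxiliary projective curve $X'$ whose special fiber has $C_{x_1}$ as its only non-rational component, invoking Theorem \ref{PD} for $X'$, and then ``excising'' down to $V$ --- is not carried out, and as stated it is not clear it can be: you would need to produce a semistable model over $K^\circ$ realizing $C_{x_1}$ with the prescribed configuration of tangent directions, and to identify a Berkovich neighborhood of the corresponding point of ${X'}^{\an}$ with $V$ as an analytic space (or at least compatibly with the sheaves $\AS^{p,q}$). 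None of this is routine, and the proof cannot be considered complete with this step outsourced to an unconstructed object.

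The point you are missing is that no auxiliary curve is needed: the local statement at $G(X)$ can be extracted from the global Poincar\'e duality of $X$ \emph{itself}. Choose pairwise disjoint open neighborhoods $V_1,\dots,V_k$ of the points of $G(X)=\{x_1,\dots,x_k\}$. Then $\Xan = \bigl(\bigcup_i V_i\bigr) \cup \bigl(\Xan\setminus G(X)\bigr)$, and in this cover three of the four relevant sets satisfy $\PD$: the total space $\Xan$ by Theorem \ref{PD} (using $S_X=0$), and both $\Xan\setminus G(X)$ and $\bigl(\bigcup_i V_i\bigr)\setminus G(X)$ by \cite[Theorem 2]{JellWanner}, since they contain no type II points of positive genus. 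Lemma \ref{lem 3 out of 4}~$i)$ then gives $\PD$ for $\bigcup_i V_i$, and part $ii)$ (which requires the disjointness of the $V_i$, since its proof uses $\PD_{\bigcup V_i} = \bigoplus_i \PD_{V_i}$) lets you peel off $\PD$ for any sub-union, in particular for the $V_j$ with $x_j\in U$. After that, writing $U = (U\setminus G(X)) \cup \bigcup_{j\in J} V_j$ and applying Lemma \ref{lem 3 out of 4}~$i)$ once more finishes the proof; your induction is then unnecessary. So the architecture of your argument is sound, but the one step that carries the real content is left open, and the correct way to close it is a further application of the global result rather than a new geometric construction.
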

\begin{proof}
Replacing $X$ by its canonical smooth compactification, we may assume that $X$ is projective. 
Then we still have $S_X = 0$. 

We first treat the case where $U$ contains $G(X)$. 
Recall that $G(X)$ is the finite set of type II points such that $C_x$ has positive genus.
Now $\Xan$ satisfies $\PD$ by Theorem \ref{PD}
and $\Xan \setminus G(X)$ and $U \setminus G(X)$ do so by \cite[Theorem 2]{JellWanner}, since they do not contain
any type II points of positive genus. 
Then using Lemma \ref{lem 3 out of 4} $i)$ with $U = \Xan$, $U_1 = U$ and $U_2 = \Xan \setminus G(X)$ 
shows that $U$ satisfies $\PD$.

Now we treat the general case. 
We write $G(X) = \{x_1,\dots,x_k\}$. 
We choose for each $i$ an open neighborhood $V_i$ of $x_i$ such that 
$(V_i)_{i =1,\dots,k}$ are pairwise disjoint. 
By intersecting with $U$ if necessary, we may assume that if $x_i \in U$, we have $V_i \subset U$. 
Let $J$ be the set of $i$ such that $x_i \in U$. 
Since $\bigcup_{i = 1} ^k V_i$ contains $G(X)$, it satisfies $\PD$ by our reduction above. 
Now by Lemma \ref{lem 3 out of 4} $ii)$ we know that $\bigcup_{j \in J} V_j$ 
also satisfies $\PD$. 
We write $ U_1 = U \setminus G(X)$ and  $U_2 = \bigcup_{j \in J} V_j$. 
Both $U_1$ and $U_{12}$ satisfy $\PD$ since they do not contain any type II points of positive genus \cite[Theorem 2]{JellWanner}. 
Since we just showed that $U_2$ satisfies $\PD$, so does $U$ by Lemma \ref{lem 3 out of 4} $i)$. 
\end{proof}

\subsection{Dimensions of cohomology and a conjecture by Y. Liu}

For curves $X$ that satisfy $S_X = 0$, Poincar\'e duality enables us to completely calculate 
the tropical Hodge numbers $h^{p,q}(U) := \dim_\R \HH^{p,q}(U)$ 
both for $U = \Xan$ as well as for a basis of the topology.   
The calculation of $h^{p,q}(\Xan)$ enables us to establish the case of curves for a conjecture by Y. Liu. 

For the case of a basis of the topology, we follow closely the proofs of Wanner and the author in \cite[Section 5]{JellWanner},
using our stronger Poincar\'e duality results from Theorem \ref{PD local}.  

\begin{satz}\label{thm dimensions global}
Let $X$ be a smooth projective curve over $K$ such that $S_X = 0$.
We denote by $b$ the dimension of the first singular cohomology $\HH^1_{\sing}(\Xan, \R)$ of $\Xan$. 
Then we have
\begin{align*}
h^{p,q}(\Xan) = 
\begin{cases} 
1 \text{ if }(p,q) = (0,0),(1,1), \\
b \text{ if }(p,q) = (0,1),(1,0).
\end{cases}
\end{align*}
\end{satz}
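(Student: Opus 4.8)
The plan is to read off all four tropical Hodge numbers from the exact sequences already established, together with the identification of $\HH^{0,q}(\Xan)$ with singular cohomology. First I would record the easy values: $h^{0,0}(\Xan) = 1$ since $\AS^{0,0,\cl} \cong \underline{\R}$ and $\Xan$ is connected, and $h^{0,1}(\Xan) = b$ by Remark \ref{bem singular cohomology}, which gives $\HH^{0,1}(\Xan) \cong \HH^1_{\sing}(\Xan, \R)$. Since $S_X = 0$, Theorem \ref{PD} tells us that Poincar\'e duality holds for $X$; by Lemma \ref{lem non degenerate left} this forces $h^{1,1}(\Xan) = 1$ and $h^{1,0}(\Xan) = h^{0,1}(\Xan) = b$. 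That already establishes the statement, so the only real content is assembling these citations in the right order.

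Alternatively, and perhaps more transparently, I would run the argument directly through the two key exact sequences of Section 3 without invoking Poincar\'e duality as a black box. From Corollary \ref{kor harmonic sequence} and $S_X = 0$ we get $\HH^1(\Xan, \Aff) \cong \R$, so $\dim \HH^1(\Xan, \Aff) = 1$. Feeding this into the exact sequence of Corollary \ref{kor exponential sequence},
\begin{align*}
0 \to \HH^{1,0}(\Xan) \to \HH^{0,1}(\Xan) \to \HH^1(\Xan, \Aff) \to \HH^{1,1}(\Xan) \to 0,
\end{align*}
and using that the map $\HH^1(\Xan, \Aff) \to \HH^{1,1}(\Xan)$ is surjective with $h^{1,1}(\Xan) \geq 1$ (the integration pairing of Lemma \ref{lem non degenerate left} is non-trivial), I conclude that this last map is an isomorphism, hence $h^{1,1}(\Xan) = 1$ and the map $\HH^{1,0}(\Xan) \to \HH^{0,1}(\Xan)$ is an isomorphism as well. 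Combining with $\HH^{0,1}(\Xan) \cong \HH^1_{\sing}(\Xan, \R)$ gives $h^{1,0}(\Xan) = h^{0,1}(\Xan) = b$.

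There is essentially no obstacle here: the theorem is a bookkeeping corollary of the machinery built up in Section 3 and the Poincar\'e duality statement Theorem \ref{PD}. The only point requiring a word of care is that $h^{1,1}(\Xan) \geq 1$ — i.e. that $\HH^{1,1}(\Xan)$ is nonzero — which follows because the integration map $\AS^{1,1}(\Xan) \to \R$ is nonzero and descends to cohomology, as already noted in the proof of Lemma \ref{lem non degenerate left}. Everything else is a direct citation of earlier results in the paper.
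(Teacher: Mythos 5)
Your first paragraph is exactly the paper's proof: $h^{0,0}=1$ and $h^{0,1}=b$ from the identification with singular cohomology, then $h^{1,1}=1$ and $h^{1,0}=b$ from Theorem \ref{PD} together with the equivalence recorded in Lemma \ref{lem non degenerate left}. Your alternative route through Corollaries \ref{kor harmonic sequence} and \ref{kor exponential sequence} is also correct but merely unrolls the proof of Theorem \ref{PD}, so it is not genuinely different.
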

Note that $b$ is also the first Betti number of the incidence graph of a semistable model of $X$ or
equivalently of any skeleton of $\Xan$ \cite[4.9]{BPR2}.
\begin{proof}
We have $h^{0,0}(\Xan) = 1$  and $h^{0,1}(\Xan) = b$ by \cite[Corollary 4.6]{Jell}. 
Thus  $h^{1,1}(\Xan) = 1$ and $h^{1,0}(\Xan) = b$ follow from Theorem \ref{PD}.
\end{proof}

For a variety $X$ over $K$ and $p \geq q$, 
Y. Liu constructed maps  $N \colon \HH^{p,q}(\Xan) \to \HH^{q,p}(\Xan)$ and conjectures 
that these are isomorphisms when $K$ is an algebraically closed non-archimedean field whose
residue field is the algebraic closure of a finite field and $X$ is smooth and proper \cite[Conjecture 5.2]{Liu2}. 
He established this for $(p,q) = (1,0)$ under some additional assumptions \cite[Theorem 5.12]{Liu2}.
We show that when $X$ is a curve, these additional assumptions are not necessary. 

\begin{satz} \label{proof Liu conjecture}
Let $X$ be a smooth projective curve over $K$, where the residue field of $K$ is the algebraic closure of a finite field. 
Then Liu's conjecture holds for $X$. 
\end{satz}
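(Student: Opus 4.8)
The plan is to derive Liu's conjecture for curves as a formal consequence of the Poincaré duality result in Theorem \ref{PD} together with Corollary \ref{corollary finite field}. First I would recall that for a smooth projective curve there are only three relevant pairs with $p \geq q$, namely $(0,0)$, $(1,0)$ and $(1,1)$. For $(p,q) = (0,0)$ and $(p,q) = (1,1)$ the monodromy map $N \colon \HH^{p,q}(\Xan) \to \HH^{p,q}(\Xan)$ is an endomorphism of a space that we already know is one-dimensional: indeed $\HH^{0,0}(\Xan) \cong \R$ always, and $\HH^{1,1}(\Xan) \cong \R$ by Theorem \ref{PD} since $S_X = 0$ by Corollary \ref{corollary finite field}. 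So the only content is that $N$ is nonzero on these one-dimensional spaces, which should follow from Liu's construction (e.g.\ $N$ acts as the identity, or is known to be injective, on $\HH^{0,0}$; and compatibility of $N$ with the cup product or cycle class map pins down the $(1,1)$ case).

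The genuinely interesting case is $(p,q) = (1,0)$, where one must show $N \colon \HH^{1,0}(\Xan) \to \HH^{0,1}(\Xan)$ is an isomorphism. Here I would invoke Lemma \ref{lem non degenerate left}: we have an injective map $J \colon \HH^{1,0}(\Xan) \to \HH^{0,1}(\Xan)$, and under Poincaré duality (which holds by Theorem \ref{PD} and Corollary \ref{corollary finite field}) we get $h^{1,0}(\Xan) = h^{0,1}(\Xan)$, so $J$ is automatically an isomorphism. The remaining point is to identify Liu's $N$ with (a nonzero scalar multiple of) such a map, or at least to argue that any injective linear map between equidimensional finite-dimensional spaces is an isomorphism and that $N$ is injective. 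The injectivity of $N$ in this range is exactly what Liu proves in \cite[Theorem 5.12]{Liu2} under his extra hypotheses; the task is to show injectivity holds unconditionally for curves.

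The cleanest route, and the one I would take, is: since $\HH^{1,0}(\Xan)$ and $\HH^{0,1}(\Xan)$ have the same (finite) dimension $b$ by Theorem \ref{thm dimensions global}, it suffices to show $N$ is injective, equivalently surjective. Liu's monodromy map is compatible with the Poincaré pairing in the sense that $N$ together with the pairing $\HH^{1,0}(\Xan) \times \HH^{0,1}(\Xan) \to \R$ produces a symmetric (or definite) bilinear form on $\HH^{1,0}(\Xan)$ whose non-degeneracy is equivalent to $N$ being an isomorphism; and non-degeneracy of that pairing is precisely the content of Poincaré duality, which we have. So I would spell out that $\langle \alpha, N\alpha\rangle$ is the relevant form, cite the compatibility from \cite{Liu2}, and conclude.

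The main obstacle I anticipate is bookkeeping around Liu's definition of $N$: one must check that his monodromy operator, constructed via the weight filtration on the analytic de Rham complex, indeed coincides up to sign or scalar with the map $J$ of \cite[Proposition 3.4.11]{JellThesis} appearing in Lemma \ref{lem non degenerate left}, or at least shares the key property $\int \alpha \wedge N\alpha \neq 0$ for $\alpha \neq 0$. If that compatibility is already recorded in \cite{Liu2} (for instance as the statement that $N$ is injective with $\int \alpha \wedge N \alpha > 0$), the proof is immediate; otherwise one needs a short argument comparing the two constructions, which is where essentially all the work lies. Everything else — reducing to the three pairs, handling $(0,0)$ and $(1,1)$ via one-dimensionality, and deducing the isomorphism from equal dimensions plus injectivity — is formal.
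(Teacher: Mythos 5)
Your proposal is correct and follows essentially the same route as the paper: reduce to $(p,q)=(1,0)$ (the cases $p=q$ being trivial), identify $N$ with the injective operator $J$ of \cite[Proposition 3.4.11]{JellThesis} — the compatibility you flag as the main obstacle is indeed already recorded in \cite[Lemma 3.2 (1)]{Liu2} — and conclude from $h^{1,0}(\Xan)=h^{0,1}(\Xan)<\infty$, which follows from $S_X=0$ and Theorem \ref{thm dimensions global}.
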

\begin{proof}
The only case we have to consider is $(p,q) = (1,0)$. 
Then the map $N$ coincides with the operator $J$ \cite[Lemma 3.2 (1)]{Liu2}, which we know to be injective by \cite[3.4.11]{JellThesis}. 
We have $S_X = 0$ since the residue field of $K$ is the algebraic closure of a finite field. 
Thus by Theorem \ref{thm dimensions global}, the dimensions of $\HH^{1,0}(\Xan)$ and $\HH^{0,1}(\Xan)$ agree and are finite, 
which shows that the injective map $N$ is an isomorphism. 
\end{proof}

As in \cite{JellWanner}, for local considerations we use results on the structure of non-archimedean curves from \cite{BPR2}, 
as well as the notion of semistable vertex sets defined there. 
A semistable vertex set is a finite subset $V$ of $\Xan$ such that $\Xan \setminus V$ is isomorphic 
to a disjoint union of open balls and finitely many open annuli. 
The following definition is inspired by \cite[Corollary 4.27 \& Definition 4.28]{BPR2} and also appeared in \cite[Definition 5.2]{JellWanner}.

\begin{defn}\label{Def. simple}
Let $X$ be a smooth curve. 
An open subset $U$ of $\Xan$ is called \emph{simple} if it is either isomorphic to an open disc
or an open annulus or if there exists a semistable vertex set $V$, with associated skeleton $\Sigma(X, V)$ and retraction $\tau_V$,
and a simply connected open subset $W$ of $\Sigma(X, V)$
 such that 
$U= \tau_{V}^{-1}(W)$. 
A simple open subset $U$ is called \emph{strictly simple}
if its closure in $\Xan$ is simply connected in the case of a disc or an annulus or 
if the closure of $W$ in $\Sigma(X, V)$ is simply connected for $U=\tau^{-1}_V(W)$.
\end{defn}

\begin{bem} \label{rem properties}
Strictly simple subsets form a basis of the topology  
and have finitely many boundary points \cite[Proposition 5.3 \& Corollary 5.5]{JellWanner}. 
If $U$ is a strictly simple open subset with $k$ boundary points, it is properly homotopy equivalent 
to the one point union of $k$ half open intervals, glued together at the closed ends \cite[Lemma 5.6]{JellWanner}.
\end{bem}

\begin{satz} \label{Theorem Teilmenge Koho.}
Let $X$ be a smooth projective curve over $K$ such that $S_X = 0$. 
Let $U$ be a strictly simple open subset of $\Xan$ and denote by $k := \# \del U$ the finite number of boundary points. 
Then we have 
	\begin{align*}
	h^{p,q}(U) = 
	\begin{cases}
	1 &\text{ if } (p,q) = (0,0) \\
	k - 1 &\text{ if } (p,q) = (1,0) \\
	0 &\text{ else } 
	\end{cases}
	\text{ and }
	h^{p,q}_c(U) = 
	\begin{cases}
	1 &\text{ if } (p,q) = (1,1) \\
	k - 1 &\text{ if } (p,q) = (0,1) \\
	0 &\text{ else}.
	\end{cases}
	\end{align*}
\end{satz}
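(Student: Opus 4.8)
The plan is to compute the ordinary cohomology $h^{p,q}(U)$ first, then deduce the compactly supported version via the local Poincar\'e duality established in Theorem \ref{PD local}. Since $S_X = 0$, Theorem \ref{PD local} tells us that every open subset of $\Xan$, in particular our strictly simple $U$, satisfies $\PD$, so $\PD_U \colon \HH^{p,q}(U) \to \HHc^{1-p,1-q}(U)^*$ is an isomorphism for all $p,q$. This immediately reduces the two displayed tables to one: once we know $h^{p,q}(U)$, the values $h^{p,q}_c(U) = h^{1-p,1-q}(U)$ follow, and indeed the claimed values are consistent with this symmetry (note $(0,0) \leftrightarrow (1,1)$ and $(1,0) \leftrightarrow (0,1)$ with the same dimension $k-1$). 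So the real content is the computation of the left-hand table.

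To compute $h^{p,q}(U)$, I would use Remark \ref{bem singular cohomology}, which gives $\HH^{0,q}(U) \cong \HH^q_{\sing}(U,\R)$, together with Remark \ref{rem properties}: a strictly simple open subset $U$ with $k$ boundary points is properly homotopy equivalent to a wedge of $k$ half-open intervals glued at their closed ends, which is contractible. Hence $\HH^0_{\sing}(U,\R) = \R$ and $\HH^q_{\sing}(U,\R) = 0$ for $q \geq 1$, giving $h^{0,0}(U) = 1$ and $h^{0,1}(U) = 0$. For the $(1,q)$ row, I would invoke Remark \ref{bem singular cohomology} again in the form $\HH^{p,q}(U) \cong \HH^q(U, \AS^{p,0,\cl})$, so $h^{1,1}(U) = \dim \HH^1(U, \AS^{1,0,\cl})$ and $h^{1,0}(U) = \dim \HH^0(U, \AS^{1,0,\cl})$. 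The case $h^{1,1}(U) = 0$ should follow from the fact that $U$ has the homotopy type of a one-dimensional contractible complex (a tree), on which $\HH^1$ of any sheaf relevant here vanishes; more carefully, one restricts the exponential-type sequence (\ref{exponential sequence}) to $U$ and uses the long exact sequence together with $\HH^2(U,\underline{\R}) = \HH^1(U, \underline{\R}) = 0$ and the known low-degree terms. The value $h^{1,0}(U) = k-1$ is the substantive one: here I would follow closely the argument of Wanner and the author in \cite[Section 5]{JellWanner}, using the proper homotopy model of $U$ and the explicit description of closed $(1,0)$-forms (equivalently global sections of $\AS^{1,0,\cl}$, which by the remark after Proposition \ref{thm exponential sequence} are functions locally factoring as affine functions through a tropicalization) on a star-shaped tropical curve with $k$ leaves; such forms are determined by their $k$ outgoing slopes subject to a single balancing (harmonicity) condition at the center, giving a $(k-1)$-dimensional space, and one checks no nonzero such form is $d''$-exact (there is nothing to be exact of, as $\AS^{1,-1} = 0$).

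The main obstacle I anticipate is the $h^{1,0}(U) = k-1$ computation done rigorously at the level of Chambert--Loir--Ducros forms on the Berkovich space $U$ rather than on a naive tropical model: one must pass between $U$ and its skeleton/homotopy model in a way compatible with $\AS^{1,0,\cl}$, control the behavior near the $k$ boundary points (which is exactly where "strictly simple" versus merely "simple" matters, cf.\ Definition \ref{Def. simple}), and verify the slope/balancing count. Fortunately \cite[Theorem 2]{JellWanner} already gives $\PD$ for $U$ (it is disjoint from $G(X)$ vacuously when $S_X=0$, or one just cites Theorem \ref{PD local}), and \cite[Section 5]{JellWanner} carries out essentially this local analysis for arbitrary smooth projective $X$ on strictly simple subsets avoiding $G(X)$; under our hypothesis $S_X = 0$ there is no such avoidance needed, so the cited arguments apply verbatim to every strictly simple $U$. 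I would therefore structure the proof as: (1) reduce the compact table to the ordinary one via Theorem \ref{PD local}; (2) compute $h^{0,q}(U)$ from singular cohomology and the homotopy type in Remark \ref{rem properties}; (3) compute $h^{1,1}(U)$ from the sequence (\ref{exponential sequence}) restricted to $U$ plus vanishing of $\HH^{\geq 1}(U,\underline\R)$; (4) compute $h^{1,0}(U) = k-1$ by citing and applying the local slope analysis of \cite[Section 5]{JellWanner}, now valid everywhere since $S_X = 0$.
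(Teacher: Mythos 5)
Your ingredients are the right ones (Theorem \ref{PD local}, Remark \ref{bem singular cohomology}, the proper homotopy model of Remark \ref{rem properties}), but you run the Poincar\'e duality reduction in the wrong direction, and this is where the gaps appear. The paper never touches $(1,0)$-forms or the sheaf $\Aff$ on $U$ at all: since $h^{p,q}$ and $h^{p,q}_c$ vanish outside $(p,q)\in\{0,1\}^2$, it computes the entire $(0,q)$ row of \emph{both} tables via singular cohomology --- $h^{0,q}(U)=h^q_{\sing}(U)$ and $h^{0,q}_c(U)=h^q_{c,\sing}(U)$, both proper homotopy invariants of the star $Y$ of $k$ half-open intervals (whose one-point compactification is a wedge of $k-1$ circles, giving $h^0_{\sing}=1$, $h^1_{\sing}=0$, $h^0_{c,\sing}=0$, $h^1_{c,\sing}=k-1$) --- and then reads off the $(1,q)$ rows from $\PD_U$: in particular $h^{1,0}(U)=h^{0,1}_c(U)=k-1$ and $h^{1,1}(U)=h^{0,0}_c(U)=0$. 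You instead plan to compute $h^{1,0}(U)$ and $h^{1,1}(U)$ directly and only then dualize, which forces you into the two computations you flag as obstacles.

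Concretely, two steps of your route are not justified. First, for $h^{1,1}(U)=0$ you assert that $\HH^1$ of ``any sheaf relevant here'' vanishes on a contractible space; that is false for general sheaves, and the long exact sequence of (\ref{exponential sequence}) only yields $\HH^1(U,\AS^{1,0,\cl})\cong\HH^1(U,\Aff)$, whose vanishing is exactly what remains to be proved (it would require, e.g., surjectivity of $\ddc\colon\AT^0(U)\to\AT^1(U)$ on the non-compact $U$). Second, the identification of $\AS^{1,0,\cl}(U)$ with balanced slope assignments on the skeleton, giving dimension $k-1$, is the substantive analytic content and is only sketched; \cite[Section 5]{JellWanner} is cited in the paper for the singular cohomology of the model $Y$, not for such a direct form count. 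Both gaps disappear if you compute $h^{0,1}_c(U)$ topologically and let duality do the rest.
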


\begin{proof}
First, note that $U$ has $\PD$ by Theorem \ref{PD local}.
Since further $h^{p,q}(U)$ and $h^{p,q}_c(U)$ are zero unless $(p,q) \in \{0, 1\}^2$, 
it is sufficient to calculate $h^{0,q}(U)$ and $h^{0,q}_c(U)$ for $q = 0, 1$. 
Since $\HH^{0,q}(U) \cong \HH^{q}_{\sing}(U, \R)$ , 
we only have to calculate the Betti numbers $h^{q}_{\sing}(U)$ and $h^{q}_{c, \sing}(U)$ of singular cohomology,
which are invariant under proper homotopy equivalences. 
Thus by Remark \ref{rem properties}, we have to calculate $h^{q}_{\sing}(Y)$ and $h^{q}_{c, \sing}(Y)$ for $Y$  a one point union of $k$ intervals. 
The details are carried out in the proof of \cite[Theorem 5.7]{JellWanner}.
\end{proof}

\bibliographystyle{alpha}
\def\cprime{$'$}

\end{document}